\newcommand{\RR}{\mathbb{R}}
\newcommand{\Aa}{\mathcal{A}}
\newcommand{\CC}{\mathcal{C}}
\newcommand{\MM}{\mathcal{M}}
\newcommand{\NN}{\mathbb{N}}
\newcommand{\OO}{\mathcal{O}}
\newcommand{\UU}{\mathcal{U}}
\newcommand{\VV}{\mathcal{V}}
\newcommand{\WW}{\mathcal{W}}
\newcommand{\xvec}{\vec{\mathbf{x}}}
\newcommand{\xtil}{\tilde{\mathbf{x}}}
\newcommand{\ntwo}{\hat{\mathfrak{n}}}
\newcommand{\nthree}{\hat{n}}
\newcommand{\first}{1^{\mathrm{st}}}
\newcommand{\second}{2^{\mathrm{nd}}}
\newcommand{\third}{3^{\mathrm{rd}}}
\newcommand{\bull}{\textbullet~}
\newcommand{\ie}{i.e.,~}
\newcommand{\eg}{e.g.,~}
\newcommand{\vs}{vs.~}
\newcommand{\Acc}{\texttt{Accepted}}
\newcommand{\NB}{\texttt{Narrow Band}}
\newcommand{\FA}{\texttt{Far Away}}
\newcommand{\Pp}{\texttt{Pile}}
\newcommand{\Grid}{\textsl{Grid}}
\newcommand{\Norm}{\textsl{Norm}~}
\newcommand{\Orientth}{\textsl{Orient}3~}
\title{A low complexity algorithm for non-monotonically evolving fronts \thanks{Submitted on Friday, September 12th, 2014.} }
\author{Alexandra Tcheng, Jean-Christophe Nave \thanks{Department of Mathematics and Statistics, McGill University, 805 Sherbrooke Street West, Montreal, Quebec H3A 0B9, Canada. Emails: \email{alexandra.tcheng@mail.mcgill.ca}, \email{jcnave@math.mcgill.ca}.}}
\begin{document}
\maketitle

\slugger{sisc}{xxxx}{xx}{x}{x--x}

\begin{abstract}
A new algorithm is proposed to describe the propagation of fronts advected in the normal direction with prescribed speed function $F$. 
The assumptions on $F$ are that it does not depend on the front itself, but can depend on space and time. Moreover, it can vanish and change sign. To solve this problem the Level-Set Method [Osher, Sethian; 1988] is widely used, and the Generalized Fast Marching Method [Carlini et al.\mbox{}; 2008] has recently been introduced. The novelty of our method is that its overall computational complexity is predicted to be comparable to that of the Fast Marching Method [Sethian; 1996], [Vladimirsky; 2006] in most instances. This latter algorithm is $\mathcal{O}(N^{n}\log N^{n})$ if the computational domain comprises $N^{n}$ points. Our strategy is to use it in regions where the speed is bounded away from zero -- and switch to a different formalism when $F \approx 0$. To this end, a collection of so-called \emph{sideways} partial differential equations is introduced. Their solutions locally describe the evolving front and depend on both space and time. The well-posedness of those equations, as well as their geometric properties are adressed. We then propose a convergent and stable discretization of those PDEs. Those alternative representations are used to augment the standard Fast Marching Method. The resulting algorithm is presented together with a thorough discussion of its features. The accuracy of the scheme is tested when $F$ depends on both space and time. Each example yields an $\mathcal{O}(1/N)$ global truncation error. 
We conclude with a discussion of the advantages and limitations of our method. 
\end{abstract}

\begin{keywords} front propagation, Hamilton-Jacobi equations, fast marching method, level-set method, optimal control, viscosity solutions. 
\end{keywords}

\begin{AMS} 65M06, 65M22, 65H99, 65N06, 65N12, 65N22. \end{AMS}

\pagestyle{myheadings}
\thispagestyle{plain}
\markboth{A.TCHENG, J.-C.NAVE}{A low complexity algorithm for evolving fronts}

\section{Introduction}
\label{sec:Introduction}


The design of robust numerical schemes describing front propagation has been a subject of active research for several decades. The need for such schemes is felt across many areas of applied sciences: geometric optics \cite{OsherTsai}, optimal control \cite{FalconeMin,TakeiTsai}, lithography \cite{AdalSethian1,AdalSethian2,AdalSethian3}, shape recognition \cite{ShapeRecog1,ShapeFromShading}, dendritic growth \cite{Dendritic1,Dendritic2}, gas and fluid dynamics \cite{TriplePoint,GasDynamics,LevelSetFluids}, combustion \cite{Combustion}, etc. Depending on the problem at hand, various issues may arise. Consider the following two interface propagation phenomena: A fire propagating through a forest, and a large evolving population of bacteria in a Petri dish. In either case, space can be divided into distinct regions: burnt \vs unburnt, and populated \vs unpopulated. The boundaries between those regions form fronts that evolve in time. Those examples differ from one another in that a fire front can only propagate \emph{monotonically}, whereas bacteria may advance or recede, depending on the stimuli present in their environment. This distinction led to different approaches when modelling those evolutions. Monotone propagation can be recast into a `static' problem, as opposed to non-monotone evolution, which is instrinsically time-dependent. As a result, efficient single-pass algorithms for monotone propagation have been developed. In contrast, accurate algorithms for non-monotonically evolving fronts require a larger number of computations. In this paper, we propose a model that reconciles the advantages of previous methods -- We accurately describe non-monotone front evolution with an algorithm that performs a low number of operations.


One of the early means of accurately propagating fronts was to use the Level-Set Method (LSM) \cite{OsherSethian}. This implicit approach embeds the front as the zero-level-set of an auxiliary function $\phi$. In the above example, $\phi$ could be negative in regions occupied by bacteria, and positive in other regions. Each contour of this level-set function is then evolved under the given speed function $F$, which guarantees that the front itself moves properly. The robustness and simplicity of the first order discretization of this problem made it popular. Additionally, this approach can handle a very wide class of speed functions, including those that change sign. However, describing the evolution of an $(n-1)$-dimensional front in $\RR^{n}$ requires solving for a function of $n+1$ variables, since $\phi$ depends on space as well as time. Moreover, in order for the solution to remain accurate, it is often desirable to enforce the signed distance property $|\nabla \phi | \approx 1$ in a neighbourhood of the front. There exists a vast literature on lowering the computational complexity of the LSM, cf.\mbox{} \cite{AdalSethian,ReInit1,sethian1999level,OsherFedkiw}, and on maintaining the accuracy of the solution, cf.\mbox{}  \cite{TsaiRedistancing,chopp2001some,ReInit1,SussmanFatemi,Reinit2}. Nevertheless, those features are incorporated at the expense of the simplicity and the efficiency of the original LSM. 

The Fast Marching Method (FMM) \cite{Sethian,Tsitsi} constitutes the second significant advance in the field. This approach requires the speed function to be bounded away from zero, and to be only space-dependent. 
Under those conditions, the FMM builds the `first arrival time' function $\psi$ such that to every point $\xvec$ in space is associated the value $t=\psi(\xvec)$ at which the front reaches $\xvec$, cf.\mbox{} \cite{Sethian,SethianFMM,SethianBookVariational,sethian1999level}. In the context of fire propagation, $\psi$ records the time at which the parcel of land burnt. The use of a Dijkstra-like data structure \cite{Dijkstra} renders this scheme very efficient. A variant of this algorithm known as the Fast Sweeping Method runs in $\mathcal{O}(N^{n})$ complexity \cite{zhao2005fast} when the computational domain comprises $N^{n}$ points. Recently, Falcone et al. \cite{GFMM} proposed a Generalized FMM (GFMM) that is able to handle vanishing speeds. This algorithm is supported by theoretical results on its convergence in the class of viscosity solutions. The examples presented are found to accurately propagate the fronts subject to a wide range of speed functions. However, when $F$ depends on time, the GFMM no longer makes use of a Dijkstra-like data structure. Its overall complexity is expected to revert to that of the LSM in such instances. 

In the light of this previous work, it is desirable to design an algorithm able to handle speed functions that change sign, while retaining the efficiency of the FMM. This is the main purpose of this article.
Note that if $F$ changes sign, a point $\xvec$ in space may be reached by the front several times. This implies that the arrival time can no longer be described as a function depending solely on space. 
However, it is still possible to locally describe it as the graph of a function. Consider the set $\MM := \{ (\xvec,t) : \xvec $ belongs to the front at time $t \}$. The set $\MM$ consists of the surface traced out by the fronts as they evolve through space and time. If $\MM$ embeds as a $C^{k}$-manifold of dimension $n$ in $\RR^{n} \times (0,T)$, then by definition, each point $(\xvec,t)\in \MM$ belongs to a neighbourhood that is locally the image of a $C^{k}$-function of $n$ variables. The fact that under mild assumptions $\MM$ is a compact subset of $\RR^{n} \times (0,T)$ guarantees that we only need a finite number of neighbourhoods to cover $\MM$, or equivalently, a finite number of functions to parametrize $\MM$. The images of those functions -- which possibly depend on time as well as space -- provide local representations of the set $\MM$. Our approach makes use of those other representations whenever the purely spatial one is not available -- \eg when $n=2$ and $\MM$ cannot be locally described by the standard first arrival time function $\{ t = \psi(x,y) \}$, we may describe it as $\{ x = \tilde{\psi}(y,t) \}$ or $\{ y = \bar{\psi}(x,t) \}$.
To this end, we introduce \emph{sideways} PDEs solved by those $C^{k}$-functions. We illustrate in detail how they relate to previous work, argue that they are well-posed, and show that their solution does provide a local description of $\MM$. Moreover, we provide a scheme to discretize them, prove that it converges to the correct viscosity solution, and show that it is stable.   

In practice, the proposed algorithm amounts to augmenting the FMM to be able to describe $\MM$ near those points $(\xvec,t)$ where $F(\xvec,t)=0$. The fact that different representations are used to build different parts of $\MM$ implies that those pieces need to be woven together along their overlapping parts, to form a single codimension one subset of $\RR^{n} \times (0,T)$. This is done by storing the $(n+1)$-dimensional normal associated to each point and by using interpolation.
To illustrate the overall method, examples are presented where an $\mathcal{O}(1/N)$ global truncation error is achieved. Those tests all feature speed functions that vanish, and possibly depend on time. 

Since the algorithm always approximates a function of $n$-variables, the dimensionality of the problem is never raised, unlike what happens in the LSM. As a result, the computational complexity is expected to be comparable to that of the FMM.


\paragraph{Outline of the article}
This paper is organized as follows. We state the problem we are addressing in \S \ref{sec:Preliminaries}. We also present the LSM and the FMM, before providing a simple example to motivate our method. The case where $F$ is bounded away from zero and depends on time is addressed in \S \ref{subsec:tFMM}. The \emph{sideways} PDEs we use in regions where $F \approx 0$ are introduced in \S \ref{subsec:FrontFunction}. A discussion of their properties is provided along with a convergent and stable scheme to discretize them. 
We explain how the different formalisms can be woven into a single method in \S \ref{sec:Weaving}. The pseudo-codes are given and discussed in \S \ref{sec:AlgoDiscussion}. We predict the complexity and accuracy of the overall method in \S \ref{sec:Complexity} and \S \ref{sec:Accuracy}. Four examples are then covered in details in \S \ref{sec:Accuracy}. Those assess the global behaviour and the accuracy of the scheme. The advantages and weaknesses of our approach are discussed in \S \ref{sec:Discussion}, where an additional example is covered to address the limitations of the method. We conclude in \S \ref{sec:Conclusions}. 


\section{Preliminaries}
\label{sec:Preliminaries}

\subsection{Problem statement}
\label{subsec:Statement}

Let a subset $\CC_{0} \subset \RR^{n}$ be closed with no boundary. Assume it is an orientable manifold of codimension one, with a well defined unique outer normal $\ntwo_{0} (\xvec)$. Suppose $\CC_{0}$ is advected in time, and denote the resulting subset of $\RR^{n}$ at time $t$ by $\CC_{t}$. We want to describe $\CC_{t}$ for $0<t<T$ in the case where each point $\xvec \in \CC_{t}$ is advected under the velocity 
\begin{eqnarray}
\vec{v} = \vec{v}(\xvec,t) = F(\xvec,t) \ntwo(\xvec,t)
\end{eqnarray}
i.e., with the prescribed speed function $F=F(\xvec,t)$, in the direction of the outward normal to $\CC_{t}$, $\ntwo = \ntwo(\xvec,t)$. 


\subsection{Assumptions}
\label{subsec:Assumptions}

In addition to the assumptions already stated, in the rest of this paper we assume that the following hold. The initial set $\mathcal{C}_{0}$ is known exactly, and is assumed to be $C^{2}$ in the sense that if it is given as the image of a map, \eg $\vec{\gamma}: S^{n-1} \longrightarrow \RR^{n}$, then $\vec{\gamma} \in C^{2}(S^{n-1})$. 
The speed $F=F(\vec{\mathbf{x}},t)$ is known exactly for all $(\vec{\mathbf{x}},t)$. Unless otherwise specified, it is allowed to vanish and change sign. It does not depend on the curve itself, or any of its derivatives. For simplicity, we also make the following strong assumption: the map $F : \RR^{n} \times (0,T) \longrightarrow \RR $ is analytic. In particular, this implies that the subset defined as $\mathcal{F} : = \{ (\xvec,t) : F(\xvec,t)=0 \}$ is closed and has codimension one in $\RR^{n}\times [0,T]$. 
We let $K$ be the Lipschitz constant of $F$. 
Together, those assumptions guarantee that for any given $t\in (0,T)$, there exists a well defined normal $\ntwo = \ntwo(\xvec,t)$ almost everywhere along $\CC_{t}$.


\subsection{Previous Work}
\label{subsec:PreviousWork}

For completeness we briefly go over two of the methods mentioned in the introduction. Considering that the set $\RR^{n} \setminus \CC_{t}$ consists of two connected components, we define $\Aa_{t}$ to be the bounded one. 

\subsubsection{The Level-Set Method}
\label{subsubsec:LSM}

This approach was introduced by Osher \& Sethian in \cite{OsherSethian}. Their idea is to embed the curve $\CC_{t}$ as the zero-level-set of a function $\phi: \RR^{n} \times [0,T] \rightarrow \RR$, i.e., $\CC_{t} = \{ \xvec : \phi(\xvec,t) = 0 \}$. In this setting, the outward normal $\ntwo(\xvec,t)$ is $\frac{\nabla \phi}{|\nabla \phi|}$. The Level-Set Equation is derived from linear advection $\phi_{t} + \vec{v}(\xvec,t) \cdot \nabla \phi = 0$ to yield the following Initial Value Problem (IVP):
\begin{eqnarray}
\label{eq:LSE}
\left\{ \begin{array}{rcll} 
\phi_{t}+F|\nabla\phi| &=& 0 & \quad \mathrm{on}~ \RR^{n} \times (0,T) \\
\phi(\xvec,0) &=& \phi_{0}(\xvec) & \quad \mathrm{on}~ \RR^{n} \times \{ 0 \}
\end{array} \right.
\end{eqnarray}
where $\phi_{0}(\xvec)$ is such that $\{ \xvec : \phi_{0}(\xvec) =0 \} = \CC_{0}$. This method enjoys many desirable properties that have been studied in a variety of contexts \cite{EvansSpruck1,EvansSpruck2,EvansSpruck3,EvansSpruck4,OsherFedkiw,sethian1999level}. One of the most prominent is that topological changes are accurately handled, and do not require special treatment. In \cite{OsherSethian}, the authors propose various discretizations of this evolution on a spatial domain that comprises $N^{n}$ points. The resulting method has complexity $\mathcal{O}(N^{n})$ at each time step, due to the fact that all the contours of the level-set function are advected. To lower this high computational cost, it is possible to work only within a neighbourhood of the zero-level-set: This yields the Narrow Band LSM \cite{AdalSethian}. To be able to render the curve $\CC_{t}$ accurately, it is desirable to preserve the signed distance property $|\nabla \phi| \approx 1$. To this end, the reinitialization method has been studied extensively \cite{ReInit1,Reinit2,SussmanFatemi,TsaiRedistancing}. Early versions of this method tend to displace the zero-level-set, yielding inaccuracies in the final $\CC_{T}$. Moreover, they usually involve a large number of computations. 


\subsubsection{The Fast Marching Method}
\label{subsubsec:FMM}
The Fast Marching Method was independently proposed by Sethian \cite{Sethian} \& Tsitsiklis \cite{Tsitsi}. Strongly rooted in control theory, it requires that $F=F(\xvec) \geq \delta >0$ on $\RR^{n}$. Under those conditions, the FMM solves the following Eikonal equation, whose unknown is the time $\psi:\RR^{n} \mapsto \RR$ at which each point is reached by the curve 
\begin{eqnarray}
\label{eq:EikonalNoTime}
\left\{ \begin{array}{rcll} 
|\nabla\psi| &=& \frac{1}{F} & \quad \mathrm{on}~ \Aa^{c}_{0} \setminus \CC_{0} \\ 
\psi(\xvec) &=& 0 & \quad \mathrm{on}~ \CC_{0}
\end{array} \right.
\end{eqnarray} 
The FMM makes use of a Narrow Band to advance the front in a manner that enforces the characteristic structure of the PDE into the solution. See \cite{Sethian,SethianFMM,SethianBookVariational,sethian1999level} and \cite{FalconeMin} for details. Recent improvements of this method include on the one hand the work of Zhao \cite{zhao2005fast}, who further lowered the complexity of the algorithm to develop the Fast Sweeping Method. On the other hand, Vladimirsky relaxed the restrictions on the speed by allowing it to be time-dependent. We discuss this latter method in \S \ref{subsec:tFMM}.


\subsection{Motivation}
\label{subsec:Motivation}

We first present a simple example to motivate the need for an augmented FMM. Consider the initial curve $\mathcal{C}_{0} = \{ \vec{\mathbf{x}} : x^{2}+y^{2} = r_{0}^{2} \} \subset \RR^{2}$ and the time-dependent speed $F(t) = 1-ct$, where $c$ and $r_{0}$ are positive constants. Let $\phi_{0}(\vec{\mathbf{x}})$ be the signed distance function $ \phi_{0}(\vec{\mathbf{x}}) =  \sqrt{x^{2}+y^{2}}- r_{0} = : r(\vec{\mathbf{x}}) - r_{0}$. The exact solution to the IVP (\ref{eq:LSE}) is then $\phi(x,y,t) = r(\vec{\mathbf{x}}) -\left( r_{0} - \left( c \, t^{2}/2-t\right) \right)$.
The evolution of the curve can be formally split into two parts: \textbf{(1)} For $t \in [0,\frac{1}{c}]$, the circle expands until it reaches the maximal radius $R = r_{0}+\frac{1}{2c}$. \textbf{(2)}  For $t \in (\frac{1}{c}, T]$, where $T=-\frac{1}{c}\left( 1 - \sqrt{1+2cr_{0}}\right)$, the circle contracts until it collapses to the point $(0,0)$ at time $T$.

\begin{figure}
			\includegraphics[width=\textwidth]{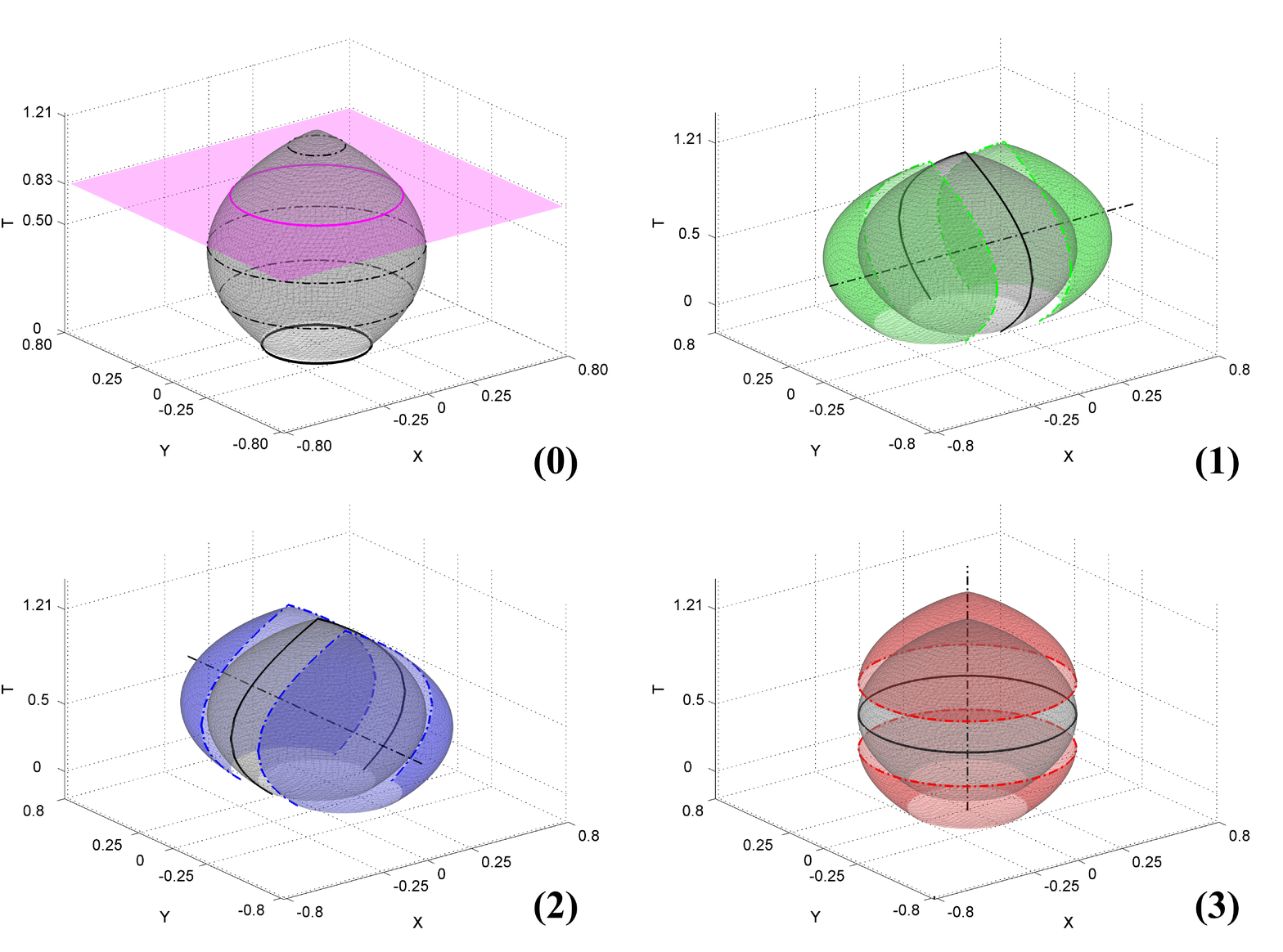}
		\caption{\textsc{Chart decomposition of $\MM$} when $c=2$ and $r_{0}=0.25$. The circle collapses to $(0,0)$ at time $T\approx 1.21$. \textbf{(0)} The manifold, sliced by the plane $t=0.83$ to yield the magenta curve $\CC_{0.83}$.  Three other typical curves $\CC_{t}$ are featured with dashed lines. $\CC_{0}$ appears as a thick plain line. \textbf{(1)} $\MM$ along with $\WW_{1,-}$ and $\WW_{1,+}$ appearing in green. \textbf{(2)} $\MM$ along with $\WW_{2,-}$ and $\WW_{2,+}$ appearing in blue. \textbf{(3)} $\MM$ along with $\WW_{3,-}$ and $\WW_{3,+}$ appearing in red. }
	\label{fig:RugbyDraft}
\end{figure} 

Consider the following atlas $\mathscr{A}$ to describe the resulting $C^{0}$-manifold $\MM$ featured on Figure \ref{fig:RugbyDraft}. Let $\UU:=\RR \times [0,T]$. Then $\mathscr{A}=\cup^{3}_{i=1} \{ ( \psi_{i,\pm}, \WW_{i,\pm}) \}$ where the real-valued functions $ \psi_{i,\pm}$ are defined as: 
\begin{eqnarray}
\begin{array}{ll}
\psi_{1,- }: \UU \longrightarrow [- R,0] & \qquad
\psi_{1,+}: \UU \longrightarrow [0,R] \\
\psi_{2,- }: \UU \longrightarrow [- R,0] & \qquad
\psi_{2,+}: \UU \longrightarrow [0,R]  \\
\psi_{3,- }: \RR^{2} \longrightarrow [0,\frac{1}{c}) & \qquad
\psi_{3,+}: \RR^{2} \longrightarrow (\frac{1}{c},T] 
\end{array}
\end{eqnarray} 
and
\begin{eqnarray}
\psi_{1,\pm}(y,t) &=& \pm \sqrt{\left( r_{0}- c\, t^{2}/2+t \right)^{2} - y^{2}} \\
\psi_{2,\pm}(x,t) &=& \pm \sqrt{\left( r_{0}- c\, t^{2}/2+t \right)^{2} - x^{2}} \\
\psi_{3,\pm}(x,y) &=& \frac{1}{c}\left( 1 \pm \sqrt{1-2c(r(\vec{\mathbf{x}}) -r_{0})}\right) 
\end{eqnarray} 
We also define the sets $\WW_{i,\pm}$ as the real part of the image of the functions $\psi_{i,\pm}$. Those sets are featured on Figure \ref{fig:RugbyDraft}. The functions $\psi_{3,\pm}$ can be verified to be the unique classical solutions to: 
\begin{eqnarray}
\left\{ \begin{array}{rcll} 
|\nabla \psi_{3,-}(\vec{\mathbf{x}})| &=& \frac{1}{F(\psi_{3,-}(\vec{\mathbf{x}}))} & \quad \mathrm{on}~ \UU_{3,-}  \\
\psi_{3,-}(\vec{\mathbf{x}}) &=& 0 & \quad \mathrm{on}~ \mathcal{C}_{0}
\end{array} \right.
\end{eqnarray} 
\begin{eqnarray}
\left\{ \begin{array}{rcll} 
|\nabla \psi_{3,+}(\vec{\mathbf{x}})| &=& - \frac{1}{F(\psi_{3,+}(\vec{\mathbf{x}}))} & \quad \mathrm{on}~ \UU_{3,+} \\
\psi_{3,+}(\vec{\mathbf{x}}) &=& \frac{1}{c} & \quad \mathrm{on}~ \mathcal{C}_{1/c}
\end{array} \right.
\end{eqnarray} 
where $\UU_{3,-} = \{ \xvec :  r_{0}< r(\xvec) < R \}$ and $\UU_{3,+} = \{ \xvec :  0 \leq r(\xvec) < R \}$. Together, the graphs of $\psi_{3,-}$ and $\psi_{3,+}$ describe all of $\MM$ but the circle of radius $R$ reached at time $t=\frac{1}{c}$. On the other hand this circle lies in the union of the images of $\psi_{1,\pm}$ and $\psi_{2,\pm}$. Those functions are the unique classical solutions to
\begin{eqnarray}
\left\{ \begin{array}{cl} 
\mp (\psi_{1,\pm})_{t} + F(t) \sqrt{1+(\psi_{1,\pm})^{2}_{y}} = 0 & ~ \mathrm{on}~ \RR \times (0,T] \\
\psi_{1,\pm}(y,0) = \pm \sqrt{r^{2}_{0}-y^{2}} & ~ \mathrm{on}~ \RR \times \{ 0 \}
\end{array} \right.
\end{eqnarray} 
\begin{eqnarray}
\left\{ \begin{array}{cl} 
\mp (\psi_{2,\pm})_{t} + F(t) \sqrt{(\psi_{2,\pm})^{2}_{x}+1} = 0 & ~ \mathrm{on}~ \RR \times (0,T] \\
\psi_{2,\pm}(x,0) = \pm \sqrt{r^{2}_{0}-x^{2}} & ~ \mathrm{on}~  \RR \times \{ 0 \}
\end{array} \right.
\end{eqnarray} 

This suggests the following procedure to build $\mathcal{M}$: \textbf{(1)} First, solve for $\psi_{3,-}$. \textbf{(Inter.)} Then solve for $\psi_{1,\pm}$ and $\psi_{2,\pm}$ restricted to $[-R,R] \times [\frac{1}{c}-\epsilon,\frac{1}{c}+\epsilon]$ for some $\epsilon>0$. \textbf{(2)} Finally, solve for $\psi_{3,+}$.

Some questions immediately come to mind. Criteria to decide when to move from (1) to the intermediate step must be chosen. Similarly, knowing which equation to solve within the intermediate step is a concern. The practical aspects of how a code reconciles the results of those steps need to be addressed carefully. We discuss all of these issues, and, as a result, turn the above formal idea into an efficient algorithm that constructs $\mathcal{M}$.


\subsection{Notation}
\label{subsec:NotationI}

To lighten the notation, we will now work in the setting where $n=2$. All the results discussed extend to arbitrary $n$. 

\paragraph{Continuous setting}

We use the letter $\psi$ to denote functions whose image locally describes $\MM$. Suppose $\psi : \UU \mapsto \RR$ with $\psi : (y,t) \mapsto \psi(y,t)=x$. We introduce the following subsets of $\RR^{2}$:
\begin{eqnarray}
\Gamma_{t} &:=& \{ (x,y) \in \mathbb{R}^{2} : \psi(y,t)=x, (y,t) \in \UU \} 
\end{eqnarray}
See Figure \ref{fig:NotationPartI} for an illustration. We distinguish between $\ntwo(\xvec,t)$ the two-dimensional outward normal to $\CC_{t}$ at $\xvec$; and $\nthree(\xvec,t)$ the three-dimensional outward normal to $\MM$ at $(\xvec,t)$.

\paragraph{Discrete setting}

The spatial grids have fixed meshsize $\Delta x = \Delta y =: h$. We use 
\begin{eqnarray}
x_{i}=i \cdot h
\quad 
y_{j}=j \cdot h
\quad 
t^{k}=k \cdot \Delta t
\qquad
(i,j,k) \in \mathbb{Z}\times \mathbb{Z} \times \{ \mathbb{N}\cup \{0 \} \}
\end{eqnarray}
to denote discrete values of space and time. We usually make no distinction between the continuous functions $\psi$ and their discrete approximations, except in \S \ref{subsec:FrontFunction}. We will be using indices consistently, so that $\psi_{ij}$ can be understood as $\psi(x_{i},y_{j})$ and $\psi^{k}_{i}$ as $\psi(x_{i},t^{k})$. Nevertheless, we will explicitly mention which representation is used. If a point $p$ belongs to $\mathcal{M}$, then it may be described by one or more of the following three expressions: 
\begin{eqnarray}
p^{k}_{j} = (\psi^{k}_{j},y_{j},t^{k})
\qquad
p^{k}_{i} = (x_{i},\psi^{k}_{i},t^{k})
\qquad
p_{ij} = (x_{i},y_{j},\psi_{ij})
\end{eqnarray}

\begin{figure}
		\centering
			\includegraphics[width=0.5\textwidth]{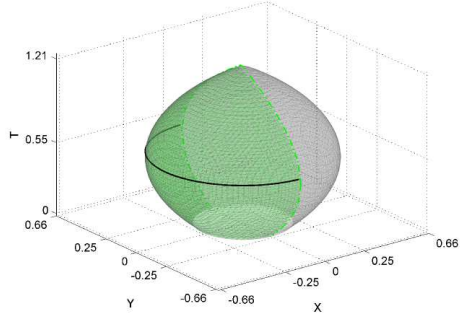}
		\caption{The subset $\Gamma_{0.55}$ associated with $\psi_{1,-}$ from \S \ref{subsec:Motivation} appears as a black plain line.}
	\label{fig:NotationPartI}
	\end{figure}


\section{A FMM for time-dependent speeds: The $t$-FMM}
\label{subsec:tFMM}

We first address the problem stated in \S \ref{subsec:Statement} under the following restriction: 
\begin{eqnarray}
F=F(\xvec,t)\geq \delta >0 \qquad \quad \forall ~ (\xvec,t) \in \RR^{2} \times [0, T]
\end{eqnarray}
Allowing the speed to depend on time yields a non-autonomous control problem. In \cite{Vlad}, the author studies this min-time-from-the-boundary problem in the context of anisotropic front propagation. In our context, the main result of \cite{Vlad} may be formulated as follows: The value function $\psi$ for this control problem satisfies the following Hamilton-Jacobi-Bellman equation: 
\begin{eqnarray}
H(\nabla \psi, \psi, \xvec) := || \nabla \psi(\xvec) || F\left( \xvec, \psi(\xvec) \right) = 1 
\end{eqnarray}
The implementation of the resulting boundary-value problem: 
\begin{eqnarray}
\left\{ 
\begin{array}{rcll}
|| \nabla \psi(\xvec) || &=& \frac{1}{ F\left( \xvec , \psi(\xvec) \right) } \leq \frac{1}{\delta} & \qquad \mathrm{on}~ \mathcal{A}^{c}_{0} \setminus \CC_{0}  \\
\psi(\xvec) &=& 0 & \qquad \mathrm{on}~ \CC_{0}
\end{array} 
\right.
\end{eqnarray}
closely mimicks that of the classical FMM. The only step that requires modifications is the one where a tentative value is assigned to each point in the Narrow Band. Following \cite{Vlad} this step is adjusted as follows. Let $\xvec_{ij}=(x_{i},y_{j})$. Without loss of generality, assume that $\xvec_{i-1,j}$ and $\xvec_{i,j+1}$ are Accepted neighbours of $\xvec_{ij}$. Consider a straight line lying in Quadrant II and ending at $\xvec_{ij}$, and suppose it intersects the line joining $\xvec_{i-1,j}$ and $\xvec_{i,j+1}$ at the point $\xtil$. See Figure \ref{fig:Quadrant}. Then: $\xtil = \xi \xvec_{i-1,j}+(1-\xi)\xvec_{i,j+1}$ for some $\xi \in [0,1]$. Letting $\vec{v} = \xvec_{ij}-\xtil$, we get $|\vec{v}| = \sqrt{\xi^{2}+(1-\xi)^{2}}~h$. Associate the following value to Quadrant II:
\begin{eqnarray}
\label{eq:VladMinimization}
\psi_{\mathrm{II}} = \min_{\xi\in [0,1]} \left\{ \psi(\xtil) + \sqrt{\xi^{2}+(1-\xi)^{2}}~ \frac{~h}{F(\xvec_{ij},\psi(\xtil))} \right\}
\end{eqnarray}
Proceeding similarly in the other quadrants yields the values $\psi_{\mathrm{I}}$, $\psi_{\mathrm{III}}$ and $\psi_{\mathrm{IV}}$. The tentative value assigned to $\psi_{ij}$ is then $\psi_{ij} = \min \{ \psi_{\mathrm{I}}, ~ \psi_{\mathrm{II}},~ \psi_{\mathrm{III}},~ \psi_{\mathrm{IV}} \}$.
Note that in two dimensions the minimization problem (\ref{eq:VladMinimization}) may be solved using a direct method; see Appendix \ref{app:tFMM}. 
This method converges to the correct viscosity solution, and is globally $\first$ order \cite{OUM1,OUM2,Vlad}. Its complexity is $\mathcal{O}(N^{n}\log N^{n})$. 
In subsequent sections of this paper, we will refer to this modified FMM as the `$t$-FMM'. The results presented in this section yield Algorithm \ref{AlgotFMM} given in \S \ref{sec:AlgoDiscussion}.
Finally, 
in the general case $|F| \geq \delta > 0$, the PDE we wish to solve is $|| \nabla \psi(\xvec) || \, |F\left( \xvec, \psi(\xvec) \right)| = 1$.

\begin{figure}
		\centering
			\includegraphics[width=0.7\textwidth]{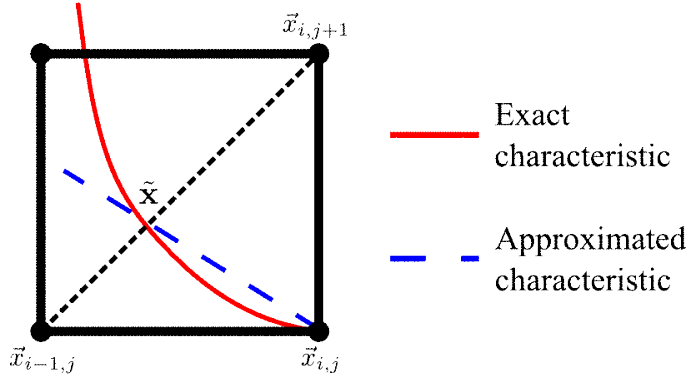}
		\caption{ If the characteristic comes from Quadrant II.}
	\label{fig:Quadrant}
	\end{figure} 


\section{A local description of the evolving front: The sideways representation}
\label{subsec:FrontFunction}
An option to study the evolution of propagating curves or surfaces is to represent the front as a function that depends on time, \eg $y=Y(x,t)$ \cite{SethianFMM}. Although successful at describing the evolution locally, this approach fails to capture the global properties of the front. 
Nevertheless we believe that this approach can be used near regions where $F$ vanishes.

\subsection{Heuristics} \label{subsec:Heuristics} We first present an argument in the smooth setting. Consider the solution $\phi$ to IVP (\ref{eq:LSE}). Suppose $\phi \in C^{1}(\xvec_{0},t_{0})$ and $\phi(\xvec_{0},t_{0})=0$. Assume furthermore that $\phi_{x}(\xvec_{0},t_{0}) \neq 0$, so that the mapping is locally invertible. From the Implicit Function Theorem there exist open neighbourhoods $(\xvec_{0},t_{0}) \in \mathcal{V}$ and $\UU \subset \RR \times [0,T]$, as well as a function 
\begin{eqnarray}
\psi: \UU \longrightarrow \RR ~,
\quad 
\psi: (y,t) \mapsto x=\psi(y,t) ~,
\quad 
\psi \in C^{1}(\UU) ~,
\quad 
(\psi(y,t),y,t) \in \mathcal{V}
\end{eqnarray}
satisfying $\phi(\psi(y,t),y,t) = 0$ $\forall ~ (y,t) \in \UU$. Taking full derivatives of $\phi$ with respect to $y$ and $t$, and using the fact that in $\VV$, $\phi$ satisfies the LSE pointwise gives: 
\begin{eqnarray}
(-\phi_{x}\psi_{t})+F\sqrt{\phi^{2}_{x} + (-\phi_{x}\psi_{y})^{2}} = 0 
\quad \Longleftrightarrow \quad
-\psi_{t} \pm F\sqrt{1 + \psi^{2}_{y}} = 0
\end{eqnarray}
where $\phi_{x}$ and $F$ are evaluated at $(x,y,t)=(\psi(y,t),y,t)$. The sign used in the last equation depends on $\phi_{x} = \pm \sqrt{\phi^{2}_{x}}$. We let $a:= -\mathrm{sign}(\phi_{x}(\xvec_{0},t_{0}))$. 

Now, let $\psi$ satisfy the following Initial Value Problem:  
\begin{eqnarray}
\label{eq:Sideways}
\left\{ \begin{array}{cl} 
\psi_{t} + a F(\psi,y,t) \sqrt{1+\psi^{2}_{y}} = 0 & ~ \mathrm{on}~ \UU \cap \left( \RR \times (t_{0},T) \right)\\
\psi(y,t_{0}) = \psi_{0}(y) & ~ \mathrm{on}~  \UU \cap \left( \RR \times \{ t_{0} \} \right)
\end{array} \right.
\end{eqnarray} 
where $\psi_{0}$ is chosen such that $\phi(\psi_{0}(y),y,t_{0})=0$. Then for all $t \in (t_{0},T)$ the set $\Gamma_{t}$ locally describes the curve at time $t$, i.e., $\Gamma_{t} = \CC_{t}\cap \mathcal{V}$. We now investigate the case where $\MM$ is merely $C^{0}$. For simplicity, we work with $t_{0}=0$. 

\paragraph{Remark} Applying the same argument assuming $\phi_{t}(\xvec_{0},t_{0}) \neq 0$ allows one to formally relate the LSE to the Eikonal equation \cite{OsherSethian}: 
\begin{eqnarray}
\phi_{t}+F(x,y,\psi) \sqrt{(-\phi_{t}\psi_{x})^2 + (-\phi_{t}\psi_{y})^2} = 0
\quad \Longleftrightarrow \quad
||\nabla \psi|| = \frac{-\mathrm{sign}({\phi}_{t})}{F(x,y,\psi)}
\end{eqnarray}
But since by the LSE we have $a:= -\mathrm{sign}(\phi_{t}) = \mathrm{sign}\left(F(x,y,\psi)\right)$, this simplifies to $||\nabla \psi|| = \frac{1}{|F(x,y,\psi)|}$.


\subsection{Theory}

Equation (\ref{eq:Sideways}) is a Cauchy problem of the form 
\begin{eqnarray}
\label{eq:Cauchy}
\left\{ \begin{array}{cl} 
\psi_{t} +H(y,t,\psi,\psi_{y}) = 0 & ~ \mathrm{on}~  \UU \cap \left( \RR \times (0,T) \right) \\
\psi(y,0) = \psi_{0}(y) & ~ \mathrm{on}~ \UU \cap \left( \RR \times \{ 0 \} \right)
\end{array} \right.
\end{eqnarray} 
where the Hamiltonian $H:\RR^{1} \times (0,T) \times \RR \times \RR^{1} \rightarrow \RR$ is defined as $H(y,t,\psi,\psi_{y}) = aF(\psi,y,t)\sqrt{1+\psi^{2}_{y}} $. The function $\psi_{0}$ is defined such that for all $y \in \UU \cap \left( \RR \times \{ 0 \} \right)$ we have $(\psi_{0}(y),y) \in \CC_{0}$. We resort to the rich theory of viscosity solutions of Hamilton-Jacobi equations to study various properties of this problem 
\cite{bardi2008optimal,
BarlesExistence,
MR732102,
UserGuideViscosity,
CrandallLions,
evans2010partial, 
koike2004beginner,
souganidisExistence,
subbotin1994generalized}.
We first address the well-posedness of the PDE. It is a simple matter to verify that the assumptions on $H$ required to apply Theorem 1.1 in \cite{souganidis1985approximation} hold in our context.\footnote{with the exception of (H3) in \cite{souganidis1985approximation}. However, it may be modified to get $\gamma_{R,P} \in \RR$ if $p\in B_{N}(0,P)$ for some $P>0$. } This yields 

\begin{theorem}[Existence \& Uniqueness] 
\label{thm:ExistenceUniqueness}
There exists a unique viscosity solution $\psi$ to problem (\ref{eq:Cauchy}).
\end{theorem}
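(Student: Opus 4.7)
The plan is to verify the hypotheses of Theorem 1.1 in \cite{souganidis1985approximation} for the Hamiltonian $H(y,t,r,p) = aF(r,y,t)\sqrt{1+p^2}$ and the initial datum $\psi_0$; that theorem then delivers both existence and uniqueness of a viscosity solution at once. My job therefore reduces to a hypothesis-check, plus one nontrivial modification flagged in the footnote.

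First I would dispatch the regularity hypotheses on $H$. Analyticity of $F$ gives continuity of $H$ on its full domain and smoothness in $p$ through the factor $\sqrt{1+p^2}$. The standard modulus-of-continuity hypothesis reduces to
\begin{eqnarray*}
|H(y_1,t,r_1,p)-H(y_2,t,r_2,p)| \leq K\bigl(|y_1-y_2|+|r_1-r_2|\bigr)\sqrt{1+|p|^2},
\end{eqnarray*}
which follows directly from the Lipschitz constant $K$ of $F$ and has exactly the $(1+|p|)$-structure the theorem allows. Lipschitz regularity of $\psi_0$ is inherited from the $C^2$ regularity of $\CC_{0}$ via the implicit function construction of \S \ref{subsec:Heuristics}.

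The one hypothesis that fails as stated is (H3), the coercivity of $H$ in $p$: our Hamiltonian grows only linearly in $|p|$. This is exactly what the footnote addresses, and the standard fix is to pass to a truncated Hamiltonian. Concretely, one first establishes an a priori gradient bound $|\psi_y|\leq P$ with $P$ depending only on $\mathrm{Lip}(\psi_0)$, on $\sup|F|$, and on $T$ --- geometrically, a Lipschitz graph cannot rotate to vertical within time $T$ under a bounded normal speed. One then replaces $H$ by $\tilde H$ which agrees with $H$ on $\{|p|\leq P\}$ and is coercive outside a slightly larger ball. The truncated $\tilde H$ satisfies all of (H1)--(H3), so Theorem 1.1 of \cite{souganidis1985approximation} yields a unique viscosity solution of the truncated problem; by the a priori bound this solution also solves the original equation, and any other viscosity solution of the original equation is also a solution of the truncated one, securing uniqueness.

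The main obstacle in this program is justifying the a priori Lipschitz estimate at the level of viscosity sub- and supersolutions rather than just for smooth fronts. I would do this by a comparison-with-cones argument on $\UU$: one builds explicit Lipschitz sub- and supersolutions of the form $\psi_0(y_0)\pm L\bigl(|y-y_0|+\sup|F|\,t\bigr)$ with cone-slope $L$ tuned to $\mathrm{Lip}(\psi_0)$ and $\sup|F|$, then invokes the standard comparison principle available for Hamiltonians satisfying the structural conditions above to transfer the cone bound to the solution. Once this bound is in hand, the modified Souganidis hypotheses all hold and the theorem closes the argument.
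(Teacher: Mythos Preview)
Your proposal is correct and follows exactly the paper's approach: the paper states the theorem as an immediate consequence of Theorem~1.1 in \cite{souganidis1985approximation}, with the same footnote observing that (H3) fails but can be recovered on a bounded gradient ball $B_{N}(0,P)$. You have simply fleshed out that footnote via the standard truncation-plus-a-priori-bound argument, which is more detail than the paper itself supplies.
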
 

We next verify that (\ref{eq:Cauchy}) does have the geometric interpretation advertised in the previous section. 

\begin{theorem}[$\Gamma_{t}$ locally describes $\CC_{t}$]
The set $\Gamma_{t}$ enjoys the following property: $\Gamma_{t} = \CC_{t} \cap \mathcal{V}$. 
\end{theorem}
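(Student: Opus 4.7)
The plan is to invoke the uniqueness part of Theorem \ref{thm:ExistenceUniqueness}. Specifically, I will produce a second function $\tilde{\psi}:\UU\to\RR$ whose graph at each time $t$ equals $\CC_{t}\cap\mathcal{V}$ \emph{by construction}, and then verify that $\tilde{\psi}$ is a viscosity solution of the Cauchy problem (\ref{eq:Cauchy}). Since $\psi$ is also a viscosity solution, uniqueness gives $\tilde{\psi}\equiv\psi$ on $\UU$, from which $\Gamma_{t}=\CC_{t}\cap\mathcal{V}$ follows immediately.

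To construct $\tilde{\psi}$, I would apply the Implicit Function Theorem to the level-set function $\phi$ at the distinguished point $(\xvec_{0},t_{0})\in\MM$. By the hypotheses of the heuristics $\phi$ is $C^{1}$ near $(\xvec_{0},t_{0})$ with $\phi(\xvec_{0},t_{0})=0$ and $\phi_{x}(\xvec_{0},t_{0})\neq 0$, yielding open neighborhoods $\mathcal{V}\ni(\xvec_{0},t_{0})$ and $\UU\ni(y_{0},t_{0})$ together with a $C^{1}$ map $\tilde{\psi}:\UU\to\RR$ satisfying $\phi(\tilde{\psi}(y,t),y,t)\equiv 0$ and $(\tilde{\psi}(y,t),y,t)\in\mathcal{V}$. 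Shrinking $\mathcal{V}$ if needed so that $\phi_{x}$ has constant sign, the number $a=-\mathrm{sign}(\phi_{x})$ of the heuristics is unambiguous. For each $t$, the zero-set of $\phi(\cdot,\cdot,t)$ inside $\mathcal{V}$ is, by definition of $\phi$, equal to $\CC_{t}\cap\mathcal{V}$; but it is also the graph of $\tilde{\psi}(\cdot,t)$, i.e., $\Gamma_{t}$. Likewise $\tilde{\psi}(y,0)=\psi_{0}(y)$ by construction.

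Next I would verify that $\tilde{\psi}$ solves (\ref{eq:Cauchy}) classically on $\UU$. Implicit differentiation of $\phi(\tilde{\psi}(y,t),y,t)\equiv 0$ gives $\tilde{\psi}_{t}=-\phi_{t}/\phi_{x}$ and $\tilde{\psi}_{y}=-\phi_{y}/\phi_{x}$. Plugging these into the LSE $\phi_{t}+F\sqrt{\phi_{x}^{2}+\phi_{y}^{2}}=0$ evaluated at $(\tilde{\psi}(y,t),y,t)$ and dividing by $|\phi_{x}|$ reproduces $\tilde{\psi}_{t}+aF(\tilde{\psi},y,t)\sqrt{1+\tilde{\psi}_{y}^{2}}=0$ throughout $\UU$, which is exactly the algebra already carried out in \S\ref{subsec:Heuristics}. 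Since every classical solution is a viscosity solution, the uniqueness half of Theorem \ref{thm:ExistenceUniqueness} forces $\tilde{\psi}\equiv\psi$ on $\UU$, and the geometric conclusion follows.

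The main obstacle is the regularity of $\phi$: in general, the level-set equation only admits a Lipschitz viscosity solution, so the Implicit Function Theorem is not pointwise available. Near a regular point of $\MM$ and prior to any topological change this is not an issue, since the analyticity of $F$ and the $C^{2}$ regularity of $\CC_{0}$ propagate smoothness to $\phi$ locally. For the more general $C^{0}$ setting flagged in the heuristics, one would transfer viscosity sub- and super-solution inequalities directly: a smooth test function $\varphi$ touching $\tilde{\psi}$ at $(y^{\ast},t^{\ast})$ induces a test function $(x,y,t)\mapsto x-\varphi(y,t)$ for $\phi$ with nonzero $x$-derivative, and the LSE viscosity inequality for $\phi$ becomes the desired sideways viscosity inequality after the same division by $|\phi_{x}|$.
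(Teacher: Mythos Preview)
Your route is genuinely different from the paper's. The paper does not invoke the uniqueness theorem at all; instead it argues both inclusions by contradiction via a ``first time of failure'' argument. For $\Gamma_t\subset\CC_t\cap\mathcal{V}$ it sets $t^\ast=\inf\{t:\exists\,\xvec\in\Gamma_t\text{ with }\phi(\xvec,t)\neq 0\}$, observes that at $t^\ast$ the graphs still agree, and then at a slightly later time applies the Implicit Function Theorem near a differentiable point of $\MM$ to reach a contradiction; at non-differentiable points it passes to a nearby differentiable point (singularities of $\Gamma_t$ have measure zero for first-order HJ equations) and uses continuity of $\phi$ and $\psi$. The reverse inclusion is handled symmetrically. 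Your strategy --- build $\tilde\psi$ from $\phi$ via the IFT, check it solves (\ref{eq:Cauchy}), then invoke Theorem \ref{thm:ExistenceUniqueness} --- is cleaner and more conceptual in the smooth regime, and it makes the role of uniqueness explicit rather than hiding it inside a minimal-time argument.

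Where your proposal is weaker is exactly the $C^0$ case you flag. Two concrete issues. First, once $\phi$ is merely Lipschitz the classical IFT is unavailable, so the very definition of $\tilde\psi$ on all of $\UU$ needs a Lipschitz implicit function theorem (Clarke) together with a condition ensuring $0\notin\partial_x\phi$; you should say this rather than appeal to the $C^1$ IFT. Second, the test-function transfer you sketch does not work as written: a smooth $\varphi$ touching $\tilde\psi$ at $(y^\ast,t^\ast)$ gives you information about $x-\varphi(y,t)$ \emph{along the zero set of $\phi$}, but it does not produce a function that touches $\phi$ itself from one side in a full neighborhood of $(\tilde\psi(y^\ast,t^\ast),y^\ast,t^\ast)$, which is what the viscosity definition for the LSE requires; and ``division by $|\phi_x|$'' has no meaning at a point where $\phi$ is not differentiable. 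The paper sidesteps this by never needing $\phi$ to be a viscosity test object: it only uses the IFT at differentiable points of $\MM$ and closes the gap at singular points by density plus continuity. If you want to keep your uniqueness-based strategy in the $C^0$ setting, a more robust route is to show directly that the graph-parametrization $\tilde\psi$ inherits the viscosity property from the geometric (level-set--independent) characterization of $\{\phi=0\}$, rather than trying to push a single test function through the implicit relation.
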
 

\begin{proof} 
Consider IVP (\ref{eq:LSE}) again:
\begin{eqnarray}
\left\{ \begin{array}{rcll} 
\phi_{t}+F|\nabla\phi| &=& 0 & \quad \mathrm{on}~ \RR^{2} \times (0,T) \\
\phi(\xvec,0) &=& \phi_{0}(\xvec) & \quad \mathrm{on}~ \RR^{2} \times \{ 0 \}
\end{array} \right.
\end{eqnarray}
Since it is known that $\xvec \in \CC_{t} \cap \mathcal{V}$ if and only if $\phi(\xvec,t)=0$, we may prove the theorem by showing that: $\xvec \in \Gamma_{t}$ if and only if $\phi(\xvec,t)=0$.  

\fbox{$\Longrightarrow$} We argue by contradiction. Suppose the set $\mathcal{T} = \{ T>t>0 : \exists \xvec \in \Gamma_{t} \mathrm{~s.t.~} \phi(\xvec,t) \neq 0 \}$ is not empty and define $t^{\ast} = \inf \mathcal{T}$. Since $\phi$ is continuous, $\mathcal{T}$ is open and $t^{\ast} \not \in \mathcal{T}$. Therefore, for all $\xvec^{\ast} \in \Gamma_{t^{\ast}}$, $\phi(\xvec^{\ast},t^{\ast})=0$, but for any $\epsilon>0$ sufficiently small, there exists $\xvec_{\epsilon} \in \Gamma_{t+\epsilon}$ such that $\phi(\xvec_{\epsilon},t+\epsilon) \neq 0$. If $\MM$ is differentiable at $(\xvec^{\ast},t^{\ast})$, this contradicts the argument presented in \S \ref{subsec:Heuristics}: The Implicit Function Theorem guarantees that the set $\mathcal{V}$ is open. If $\MM$ is not differentiable at $(\xvec^{\ast},t^{\ast})$, then fix $\epsilon$ and for $\delta >0$ consider $\xvec^{0} \in \Gamma_{t+\epsilon}$ such that $\|\xvec_{\epsilon}-\xvec^{0} \| \leq \delta$ and $\MM$ is differentiable at $(\xvec^{0},t+\epsilon)$. For any $\delta$, such a point can be found since for any $T>t+\epsilon>0$ the singularities of $\Gamma_{t+\epsilon}$ are subsets of measure 0.\footnote{This follows directly from the fact that Problem (\ref{eq:Cauchy}) is a first order Hamilton-Jacobi equation.} Again, the Implicit Function Theorem guarantees that there is a neighbourhood $\tilde{\mathcal{V}}$ of $(\xvec^{0},t+\epsilon)$ where $\phi(\xvec,t)=0$ for any $\xvec \in \Gamma_{t} \cap \tilde{\mathcal{V}}$. Considering the sequence $\delta_{n} = \{ \frac{1}{n} : n \in \NN \}$ and the corresponding sequence $\{ \xvec^{n} \}^{\infty}_{n=1}$, we arrive at the conclusion that $\phi(\xvec_{\epsilon},t+\epsilon) \neq 0$ contradicts the continuity of $\phi$. 

\fbox{$\Longleftarrow$} Assume that there exists $(\xvec,t) \in \mathcal{V}$ such that $\phi(\xvec,t)=0$, but there is no $y$ such that $\xvec=(\psi(y),y) \in \Gamma_{t}$. We re-use the arguments given in the proof of \fbox{$\Longrightarrow$}: If $\MM$ is differentiable at $(\xvec,t)$ then this contradicts the argument in \S \ref{subsec:Heuristics}. If $\MM$ is not differentiable at $(\xvec,t)$, then we can find a sequence $\xvec^{n} \in \Gamma_{t}$ converging to $\xvec$ such that $\phi(\xvec^{n},t)=0$, and obtain the contradiction that $\psi$ is not continuous. \qquad \end{proof}


\subsection{Generalizations}
\label{subsec:Generalizations}

More generally, the above arguments can be applied to yield that there exists neighbourhoods $(\xvec_{0},t_{0}) \in \mathcal{V}$ and $\UU \subset \RR \times [0,T]$, as well as a unique function $\psi: \UU \longrightarrow \RR$, $\psi: (z,t) \mapsto w=\psi(z,t)$ with $(w \cos(\theta)+z,w \sin(\theta)+z,t) \in \mathcal{V}$ satisfying 
\begin{eqnarray}
\label{eq:Skewed}
\left\{ \begin{array}{cl} 
\psi_{t} + a F \left( w \cos(\theta)+z,w \sin(\theta)+z,t \right) \sqrt{\psi^{2}_{z}+1} = 0 & ~ \mathrm{on}~ \UU \cap \left( \RR \times (0,T) \right) \\
\psi(z,0) = \psi_{0}(z) & ~ \mathrm{on}~ \UU \cap \left( \RR \times \{ 0 \} \right)
\end{array} \right.
\end{eqnarray} 
in the viscosity sense. Here $\theta$ is the polar angle of $\xvec_{0}$, $a = -\mathrm{sign} ( \xvec_{0} \cdot \ntwo(\xvec_{0},t_{0}))$ and $\psi_{0}$ is chosen such that for all $z \in \UU \cap \left( \RR \times \{ 0 \} \right)$, we have $(\psi_{0}(z) \cos(\theta)+z,\psi_{0}(z) \sin(\theta)+z) \in \CC_{0}$. When $\theta = 0$ we recover Problem (\ref{eq:Sideways}), whereas when $\theta = \pi/2$, we get that $\psi: (x,t) \mapsto y=\psi(x,t)$ with $(x,\psi(x,t),t) \in \mathcal{V}$ satisfies:
\begin{eqnarray}
\label{xtproblem}
\left\{ \begin{array}{cl} 
\psi_{t} + a F(x,\psi,t) \sqrt{\psi^{2}_{x}+1} = 0 & ~ \mathrm{on}~  \UU \cap \left( \RR \times (0,T) \right) \\
\psi(x,0) = \psi_{0}(x) & ~ \mathrm{on}~ \UU \cap \left( \RR \times \{ 0 \} \right)
\end{array} \right.
\end{eqnarray} 
in the viscosity sense. In subsequent sections, we will refer to Problems (\ref{eq:Sideways}) and (\ref{xtproblem}) as the $yt$- and $xt$-representations of $\MM$, whereas Problem (\ref{eq:Skewed}) will be the \emph{skewed} representation. Those problems provide \emph{sideways} representations of the evolving front. For clarity, remarks pertaining to those will usually be made for the special case of Problem (\ref{eq:Sideways}). 

\subsection{Discretization}
\label{subsubsec:Discretization} 

Finite-differences schemes for problems such as (\ref{eq:Cauchy}) have been discussed
\cite{crandall1984two, 
CrandallTartar,
souganidis1985approximation}.
Based on these works, we propose the following discretization for Equation (\ref{eq:Sideways}). In this subsection only, we will distinguish between the continuous function $\psi$, and its discrete approximation which we denote as $\chi$. The spatial derivative $\chi_{y}$ must be computed in an upwind fashion. To this end, we introduce the one-sided operators 
\begin{eqnarray}
D^{+}_{l}\chi^{r}  := \frac{\chi^{r}_{l+1} - \chi^{r}_{l}}{h}
\qquad \qquad
D^{-}_{l}\chi^{r}  := \frac{\chi^{r}_{l} - \chi^{r}_{l-1}}{h}
\end{eqnarray}
and suggest:
\begin{eqnarray}
\chi^{r+1}_{l} = \chi^{r}_{l} - a \cdot \Delta t \cdot F(\chi^{r}_{l},y_{l},t^{r}) \cdot \sqrt{1+ \mathrm{upw}(\chi^{r},l,r,\alpha)} 
\end{eqnarray}
where
\begin{eqnarray}
\mathrm{upw}(\chi^{r},l,n,\alpha) &:=& 
\max \{ \alpha,0 \} 
\left( \min \left\{
D^{+}_{l}\chi^{r},0 \right\}^{2} + \max \left\{ D^{-}_{l}\chi^{r},0 \right\}^{2} \right) \nonumber \\
&~& 
-\min \{ 0, \alpha \} 
\left( \max \left\{
D^{+}_{l}\chi^{r},0 \right\}^{2} + \min \left\{ D^{-}_{l}\chi^{r},0 \right\}^{2} \right)
\end{eqnarray}
The constant $\alpha$ acts as a switch and is defined as $\alpha = \mathrm{sign} \left( aF(\chi^{r}_{l},y_{l},t^{r} ) \right)$. 

\begin{proposition}{(Convergence.)}
\label{claim:convergence}
Let $M$ be defined as the local bound on $F$, i.e.,  $M^{r}_{l} = \sup_{(x,y,t) \in B(p^{r}_{l},2h)}\{ |F(x,y,t)| \}$, where $p^{r}_{l} = (\chi^{r}_{l},y_{l},t^{r})$. Assume that $\max \left\{ | D^{+}_{l}\chi^{r} |, | D^{-}_{l}\chi^{r} | \right\} \leq P$ for all $l \in L$ and $0\leq r \leq R$. Suppose $\Delta t$ satisfies
\begin{eqnarray}
\label{CFLbound}
M^{r}_{l} \cdot \Delta t \leq \frac{h}{2P} 
\end{eqnarray}
Then the above scheme is such that $\chi \rightarrow \psi$ as $h$ and $\Delta t \rightarrow 0$, with rate 
\begin{eqnarray}
\| \chi - \psi \|_{\infty} \leq c \sqrt{\Delta t}
\end{eqnarray}
for all $l$, where the constant $c$ depends on $\| \psi_{0} \|$, $\| D\psi_{0} \|$, the numerical Hamiltonian $g$, and $R\Delta t$ where $0 \leq r \leq R$. 

\end{proposition}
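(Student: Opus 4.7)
The plan is to cast the scheme in the standard form for monotone, consistent, $\ell^{\infty}$-stable finite-difference approximations of Hamilton--Jacobi equations from \cite{crandall1984two,souganidis1985approximation}, so that the classical convergence rate comes for free. Writing the update as
$\chi^{r+1}_{l} = \chi^{r}_{l} - \Delta t \cdot g\bigl(D^{+}_{l}\chi^{r}, D^{-}_{l}\chi^{r}, \chi^{r}_{l}, y_{l}, t^{r}\bigr)$
with numerical Hamiltonian
$g(p^{+},p^{-},w,y,t) := a F(w,y,t)\sqrt{1+\mathrm{upw}(p^{+},p^{-},\alpha(w,y,t))}$,
I would then verify the three hypotheses of the Crandall--Lions framework --- consistency, monotonicity, and stability --- and invoke their rate-of-convergence theorem.

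Consistency is immediate: when $p^{+}=p^{-}=p$, the Osher--Sethian flux collapses to $\mathrm{upw}(p,p,\alpha)=p^{2}$, so $g(p,p,w,y,t)$ recovers the true Hamiltonian $H(y,t,w,p) = a F(w,y,t)\sqrt{1+p^{2}}$, and a smooth solution of \eqref{eq:Sideways} satisfies the scheme up to a truncation error of order $h+\Delta t$. Monotonicity is the step requiring actual work. One checks that $g$ is non-increasing in $p^{+}$ and non-decreasing in $p^{-}$ by a case analysis over the four sign combinations of $(p^{+},p^{-})$ and the two signs of $\alpha=\mathrm{sign}(aF)$; this is precisely what the piecewise definition of $\mathrm{upw}$ was engineered to guarantee. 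One then checks that the full update is non-decreasing in each of $\chi^{r}_{l-1},\chi^{r}_{l},\chi^{r}_{l+1}$: the neighbouring-node contributions are handled by the signs of $\partial_{p^{\pm}} g$, while the diagonal entry is controlled by bounding
$|\partial_{p^{\pm}} g| \leq M^{r}_{l}\, |p^{\pm}|/\sqrt{1+\mathrm{upw}} \leq M^{r}_{l}\, P$
and invoking the CFL condition \eqref{CFLbound} to keep the coefficient of $\chi^{r}_{l}$ nonnegative. Stability in the sense required by the theory follows from the standing assumption $\max\{|D^{+}_{l}\chi^{r}|,|D^{-}_{l}\chi^{r}|\}\leq P$ combined with the local bound $M^{r}_{l}$ on $F$, which confines the iterates to a compact set on which $F$ is Lipschitz with constant $K$ and the numerical gradient cannot escape the regime where \eqref{CFLbound} is respected.

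With consistency, monotonicity, and stability in hand, the bound $\|\chi-\psi\|_{\infty}\leq c\sqrt{\Delta t}$ is the conclusion of the discrete doubling-of-variables argument of Crandall--Lions (see Theorem~2.1 of \cite{crandall1984two} and Theorem~3.1 of \cite{souganidis1985approximation}), and the dependence of $c$ on $\|\psi_{0}\|$, $\|D\psi_{0}\|$, the numerical Hamiltonian $g$, and $R\Delta t$ is inherited unchanged from those references; uniqueness of the limit is guaranteed by Theorem~\ref{thm:ExistenceUniqueness}. The main obstacle throughout is the monotonicity verification: the piecewise structure of $\mathrm{upw}$ together with the $aF$-dependent sign switch forces a somewhat tedious case split, and the sharpness of the constant $1/(2P)$ in \eqref{CFLbound} must be traced carefully through the derivative bounds on $g$. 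Once that bookkeeping is done, the $\sqrt{\Delta t}$ rate is delivered by citation without further analysis.
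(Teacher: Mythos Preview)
Your proposal is correct and follows essentially the same route as the paper: verify consistency and monotonicity of the numerical Hamiltonian via a case split on the sign of $\alpha$ and the signs of the one-sided differences, then invoke Theorem~3.1 of \cite{souganidis1985approximation} to obtain the $\sqrt{\Delta t}$ rate. The only cosmetic difference is that the paper checks monotonicity directly on $G(\chi^{r}_{l-1},\chi^{r}_{l},\chi^{r}_{l+1})$ with the $u$-argument of $F$ frozen, rather than first analyzing $\partial_{p^{\pm}} g$, and it defers the $\ell^{\infty}$-stability statement to a separate proposition rather than folding it into the convergence proof.
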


\begin{proof}
We proceed by showing that the scheme is \textsl{monotone} and \textsl{consistent} in the sense of \cite{souganidis1985approximation}. The results then follow from Theorem 3.1 of that same paper. The scheme can be rewritten as 
\begin{eqnarray}
\chi^{r+1}_{l} = \chi^{r}_{l} - \Delta t \cdot g \left(y_{l},t^{r},\chi^{r}_{l},D^{+}_{l}\chi^{r},D^{-}_{l}\chi^{r} \right)
\end{eqnarray}
where the numerical Hamiltonian $g$ is easily verified to be consistent, i.e., 
\begin{eqnarray}
g \left(y,t, s, \delta , \delta \right) = H(y,t,s,\delta) \qquad \forall (y,t) \in \UU,~ s \in \RR,~|\delta|<P
\end{eqnarray}
We verify monotonicity by showing that 
the function 
\begin{eqnarray}
G(\chi^{r}_{l-1},\chi^{r}_{l},\chi^{r}_{l+1})
=  \chi^{r}_{l} - a \cdot \Delta t \cdot F(u,y_{l},t^{r}) \cdot \sqrt{1+ \mathrm{upw}(\chi^{r},l,r,\alpha)} 
\end{eqnarray}
is a non-decreasing function of each of its argument, for fixed $u$, $y_{l}$ and $t^{r}$. We only treat the case $\alpha >0$, since the other case is symmetric. Writing $F=F(u,y_{l},t^{r})$ for short gives
\begin{eqnarray}
G(b,c,d) = \left\{ \begin{array}{ll}
c - a \Delta t ~ F \sqrt{1+\left( \frac{d-c}{h} \right)^{2}} & \mathrm{if}~ d-c<0, ~ c-b<0 \\
c - a \Delta t ~ F \sqrt{1+\left( \frac{c-b}{h} \right)^{2}} & \mathrm{if}~ d-c>0, ~ c-b>0 \\ 
c - a \Delta t ~ F \sqrt{1+\left( \frac{d-c}{h} \right)^{2}+\left( \frac{c-b}{h} \right)^{2}} & \mathrm{if}~ d-c<0, ~ c-b>0  \\
c - a \Delta t ~ F & \mathrm{if}~ d-c>0, ~ c-b<0 
\end{array} \right. 
\end{eqnarray}
For the first case: $G_{b}$, $G_{d} \geq 0$ are trivial to check while $G_{c} \geq 0$ only if 
\begin{eqnarray}
1 \geq \left( F^{2} \left( \frac{\Delta t}{h} \right)^{2}-1 \right) \left( - \frac{d-c}{h}\right)^{2}
\quad \Longleftarrow \quad
\frac{\sqrt{1+P^{2}}}{P} \geq M^{r}_{l} \frac{\Delta t}{h}
\end{eqnarray}
Case 2 yields the same condition, whereas Case 3 gives the more restrictive one present in the assumption of the claim. Case 4 is trivial. \qquad \end{proof}

\begin{proposition}{(Stability.)}
\label{claim:stability}
The above scheme is stable, provided that 
\begin{eqnarray}
\label{UpperBounds}
\Delta t < \min \left\{ \frac{h}{2PM^{r}_{l}} ~,~ \frac{P-2}{KP\sqrt{1+2P^{2}}} ~,~ \frac{2}{P\delta} \right\}
\end{eqnarray}
for some $\delta >0$. The constant $P$ is such that $\max \left\{ | D^{+}_{l}\chi^{r} |, | D^{-}_{l}\chi^{r} | \right\} \leq P$ for all $l \in L$ and $0\leq r \leq R$.
\end{proposition}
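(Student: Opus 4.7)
The plan is to prove a discrete gradient bound by induction on the time index $r$: assuming $\max\{|D^+_l\chi^r|,|D^-_l\chi^r|\}\le P$ for every spatial index $l$, I would show that the same bound propagates to step $r+1$ under the three CFL-type hypotheses on $\Delta t$. This is the standard sense of stability for first-order monotone schemes approximating Hamilton--Jacobi equations, and it also closes the loop with Proposition~\ref{claim:convergence}, whose hypothesis assumes exactly such a uniform bound on the one-sided differences.

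First I would write
\begin{eqnarray*}
D^+_l\chi^{r+1} - D^+_l\chi^r = -\frac{\Delta t}{h}\Bigl[ g\bigl(y_{l+1},t^r,\chi^r_{l+1},D^+_{l+1}\chi^r,D^-_{l+1}\chi^r\bigr) - g\bigl(y_l,t^r,\chi^r_l,D^+_l\chi^r,D^-_l\chi^r\bigr) \Bigr],
\end{eqnarray*}
with the analogous identity for $D^-_l\chi^{r+1}$. The right-hand side splits naturally into two contributions: one that measures the variation of the factor $F(u,y,t)$ between $(\chi^r_l,y_l,t^r)$ and $(\chi^r_{l+1},y_{l+1},t^r)$, and another that measures the variation of the $\sqrt{1+\mathrm{upw}(\cdot)}$ geometric factor. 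The first contribution I would control with the Lipschitz constant $K$: since $|\chi^r_{l+1}-\chi^r_l|\le Ph$ by the induction hypothesis, the increment of $F$ is at most $Kh\sqrt{1+P^2}$, and the sqrt factor is bounded by $\sqrt{1+2P^2}$. This is what produces the second condition $\Delta t<(P-2)/(KP\sqrt{1+2P^2})$, once one insists that the cumulative bound still lands below $P$.

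For the second contribution I would use the monotonicity of the map $G$ established in Proposition~\ref{claim:convergence}. Monotonicity in each of the three slots means that the discrete update cannot amplify an extremum of the one-sided differences beyond what the explicit $F$-variation term adds. Equivalently, one rewrites $\sqrt{1+\mathrm{upw}(\chi^r,l+1,r,\alpha)}-\sqrt{1+\mathrm{upw}(\chi^r,l,r,\alpha)}$ by a discrete chain rule and uses $|\partial_p\sqrt{1+p^2}|\le 1$ to absorb the increment into an $O(\Delta t/h)$ term, which is controlled by the CFL-like bound (\ref{CFLbound}) (essentially the first hypothesis $\Delta t\le h/(2PM^r_l)$). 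Combining both contributions yields an inequality of the form
\begin{eqnarray*}
|D^+_l\chi^{r+1}| \le |D^+_l\chi^r| + \Delta t\cdot\bigl[KP\sqrt{1+2P^2} + \tfrac{1}{2}P\delta\bigr],
\end{eqnarray*}
and the three bounds on $\Delta t$ are precisely what is needed so that the right-hand side does not exceed $P$.

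The main obstacle is the case analysis induced by the piecewise definition of $\mathrm{upw}$. The sign of the switch $\alpha=\mathrm{sign}(aF(\chi^r_l,y_l,t^r))$ can differ between indices $l$ and $l+1$, and within each fixed sign the expression for $\mathrm{upw}$ falls into one of four branches depending on the signs of $D^\pm\chi^r$. So the clean inequality above must be checked branch-by-branch; in each of the resulting cases the dominant term is the same, but the bookkeeping is tedious and one has to verify that none of the transitions between branches generates an amplification larger than what Proposition~\ref{claim:convergence} already absorbs via monotonicity. Once that case analysis is dispatched, the induction closes and stability follows.
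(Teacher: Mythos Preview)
Your approach differs from the paper's in both the \emph{notion} of stability and the \emph{method}. The paper does not argue by propagating a gradient bound at all: it invokes Theorem~7 of Oberman to show that the explicit Euler map
\[
S^{l}_{\Delta t}(\chi) = \chi_{l} - a\,\Delta t \, F(\chi_{l},y_{l},t)\sqrt{1+\mathrm{upw}(\chi,l,r,\alpha)}
\]
is a strict contraction in $\ell_\infty$, i.e.\ $\|S_{\Delta t}(\chi)-S_{\Delta t}(\tau)\|_\infty \le \|\chi-\tau\|_\infty$ for any two grid functions $\chi,\tau$. The three bounds in~(\ref{UpperBounds}) arise from bounding $S^{l}_{\Delta t}(\chi)-S^{l}_{\Delta t}(\tau)$ above and below; the second and third come respectively from the lower and upper estimates. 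In that argument the gradient bound $P$ is a \emph{hypothesis} used to control the Lipschitz constants of $G$, not the conclusion being established. Your reading of ``stability'' as ``the bound $P$ propagates in $r$'' is a different (and also reasonable) statement, but it is not what the paper proves.

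Separately, your induction as written does not close. From
\[
|D^+_l\chi^{r+1}| \le |D^+_l\chi^{r}| + \Delta t\bigl[KP\sqrt{1+2P^{2}} + \tfrac12 P\delta\bigr]
\]
you only get linear growth in $r$: if $|D^+_l\chi^{r}|$ is already at $P$, any positive increment pushes you past $P$, regardless of how small $\Delta t$ is. To make a direct gradient-propagation argument work you would need a maximum-principle style estimate showing that at the index where $|D^\pm\chi^{r}|$ attains its maximum the scheme does not increase that maximum --- an inequality of the form $\max_l|D^+_l\chi^{r+1}|\le \max_l|D^+_l\chi^{r}|$ rather than a pointwise additive one. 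That sharper estimate is obtainable (it is essentially equivalent to the $\ell_\infty$ contraction the paper uses, applied to $\chi$ and its translate), but it is not what your displayed inequality gives. The case analysis on $\mathrm{upw}$ is a real nuisance, as you note, but the more serious gap is the structure of the induction itself.
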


\begin{proof} Applying Theorem 7 of \cite{Oberman} to our scheme, it is possible to show that for $h$ small enough, the explicit Euler map defined as 
\begin{eqnarray}
S^{l}_{\Delta t}(\chi) = \chi_{l} - a\Delta t \cdot F(\chi_{l},y_{l},t) \sqrt{1+\mathrm{upw}(\chi^{r},l,r,\alpha)}
\end{eqnarray}
is a strict contraction in $\ell_{\infty}$. Bounding $S^{l}_{\Delta t}(\chi) - S^{l}_{\Delta t}(\tau) $ from below (resp.\mbox{~}above) yields the $\second$ (resp.\mbox{~}$\third$) bound in (\ref{UpperBounds}).  \end{proof}

When defining `upw', we implicitly assumed that both $\chi^{r}_{l+1}$ and $\chi^{r}_{l-1}$ were known. In the instance where one of those values is not known, we set $\chi^{r+1}_{l}$ to $+\infty$. Indeed, no value can be assigned to $\chi^{r+1}_{l}$ since it is not possible to infer where the characteristic going through the point $p^{r}_{l}=(\chi^{r}_{l},y_{l},t^{r})$ comes from. 

\paragraph{Remark}
Assuming $P=\mathcal{O}(1/h)$, we may revisit the bounds on $\Delta t$ given in (\ref{UpperBounds}). The first bound  is not very restrictive, even though it scales like $\mathcal{O}(h^{2})$. Indeed $F\approx 0$ implies that $M^{r}_{l}$ should always be small. 
The bounds imposed by stability are $\mathcal{O}(h)$,
which agrees with the usual CFL number of an advection problem. 



\section{Weaving the representations}
\label{sec:Weaving}

Both approaches just discussed in \S \ref{subsec:tFMM} and \S \ref{subsec:FrontFunction} provide methods that locally build the manifold $\MM$. We now address the question of when to use a specific representation. 


\subsection{The Sign Test} 
\label{subsec:TheSignTest}

Since the approach presented in \S \ref{subsec:tFMM} relies on the assumption that the speed is bounded away from 0, the sign of $F$ is monitored throughout the algorithm. In particular, whenever a point in $(x_{i},y_{j})$ is assigned a value $\psi_{ij}$ using the $(t)$-FMM, the Sign Test is performed as follows. Suppose the point $p_{i-1,j}=(x_{i-1},y_{j},\psi_{i-1,j})$ was used in the computation of $\psi_{ij}$. Considering the line in $xyt$-space joining the point $p_{i-1,j}$ and $p_{ij}$, we check the number of times $d$ that the speed changes sign along this line. If $d=0$, the algorithm can keep running the ($t$-)FMM: The pair $(p_{i-1,j},p_{ij})$ is said to \textsl{pass the Sign Test}. If $d=1$, we should change representation: The pair $(p_{i-1,j},p_{ij})$ \textsl{fails the Sign Test}. If $d>1$, the grid has to be refined. 


\subsection[Interpolation]{Conversion of data: Interpolation} 
\label{subsec:Interpolation}

Suppose that the pair $(p_{i-1,j},p_{ij})$ just failed the Sign Test discussed in \S \ref{subsec:TheSignTest}. Then the algorithm must change representation. 
Without loss of generality, let us suppose that the algorithm switches from the $xy$- to the $yt$-representation. This means the manifold is locally sampled by points of the form $p_{lm} = (x_{l},y_{m},t)$, where $l\in L \subset I$ and $m \in M \subset J$. The $yt$-representation requires points of the form $p^{r}_{m} = (x,y_{m},t^{r})$, where $r \in R \subset K$. See Figure \ref{fig:ConversionOfData}.

\begin{figure}
		\centering
			\includegraphics[width=\textwidth]{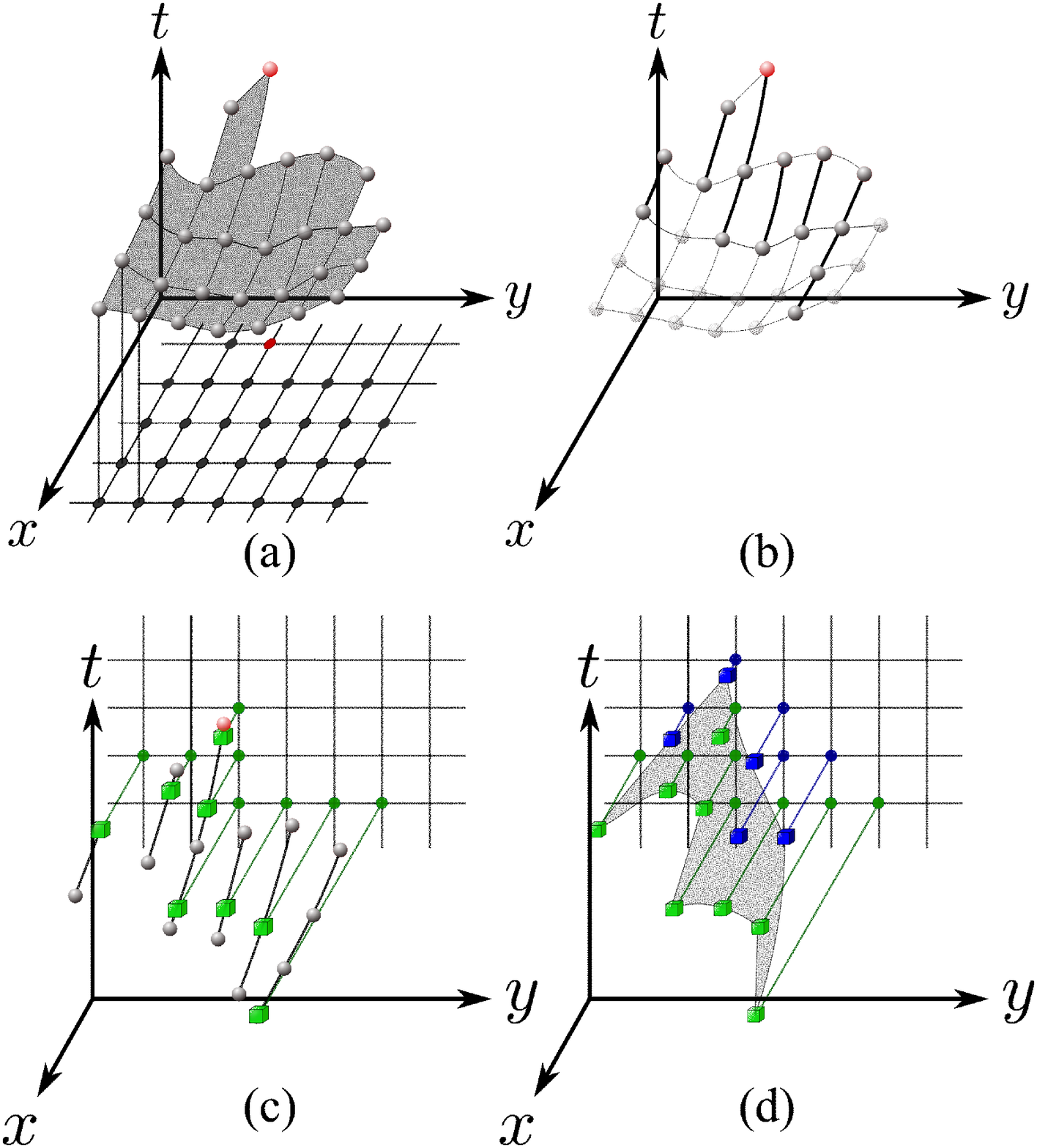}
		\caption{Converting the data using interpolation. (a) Some data in the $xy$-representation. The point $p_{\alpha \beta}$ appears in red. (b) We only keep those points used for interpolation. (c) Performing one-dimensional interpolation line by line, we obtain data in the $yt$-representation. Those are the light green squares. (d) Using the boundary data from (c), the sideways PDE can be solved, to obtain the dark blue squares. By design, the domain shrinks by two points every time step.}
	\label{fig:ConversionOfData}
	\end{figure} 


\subsection{Computing the outward normal}
\label{subsec:OutwardNormal}

Computing the outward normal $\nthree$ accurately at each point sampling $\MM$ is a crucial component of the algorithm. In regions where the level-set function $\phi$ is $C^{1}$, we have $\nthree = \frac{(\phi_{x},\phi_{y},\phi_{t})}{|(\phi_{x},\phi_{y},\phi_{t})|}$. We use the Implicit Function Theorem: If $\psi(y,t)=x$ satisfies $\phi(\psi(y,t),y,t)=0$, then $\phi_{y} = -\phi_{x}\psi_{y}$ and $\phi_{t} = -\phi_{x}\psi_{t}$. Since $\phi_{x} \neq 0$, we set $\vec{n} = (+\mathrm{sign}(\phi_{x}),\psi_{y},\psi_{t})$ and $\nthree = \vec{n}/|\vec{n}|$. We keep track of the normal associated to each point by defining the function 
\begin{eqnarray}
\mathrm{\Norm}:\mathbb{R}^{2} \times \mathbb{R}^{+}\longrightarrow S^{2}
\qquad \qquad
\mathrm{\Norm}(p_{ij})=\nthree(p_{ij})
\end{eqnarray}


\subsection{The Orientation Test}
\label{subsec:TheOrientationTest}

Whenever a point is computed, the algorithm determines the orientation of the outward normal at this point. As explained in \S \ref{subsec:FrontFunction}, this can be done based on the sign of $\nthree_{3}$, the time component of $\nthree$. We define
\begin{eqnarray}
\mathrm{\Orientth}:\mathbb{R}^{2} \times \mathbb{R}^{+}\longrightarrow \{ -1, +1 \}
\qquad \qquad
\mathrm{\Orientth}(p_{ij})= - \mathrm{sign}\left( \nthree_{3} \right)
\end{eqnarray}
The algorithm requires finding which points $p_{ab}$ in a neighbourhood of $p_{ij}$ have the same orientation as $p_{ij}$. This is done using the Orientation Test. A pair $(p_{ij},p_{ab})$ is said to \textsl{pass the Orientation Test} if $\mathrm{\Orientth}(p_{ij})=\mathrm{\Orientth}(p_{ab})$, and to fail it otherwise.


\section{Algorithms \& Discussion} 
\label{sec:AlgoDiscussion}

We introduce some notation before giving the details of the algorithms. We make use of four lists. \Acc ~and \NB ~are lists of triplets, \eg $p_{ij} = (x_{i},y_{j},\psi_{ij})$. \Pp ~and \FA ~are lists of coordinates, \eg $(x_{i},y_{j})$. We define the space and time projection operators as follows: if $p_{ij} = (x_{i},y_{j},\psi_{ij})$, then
\begin{eqnarray}
\pi_{s}: \mathbb{R}^{2} \times \mathbb{R}^{+} \longrightarrow \mathbb{R}^{2} 
&\qquad& 
\pi_{s}(p_{ij}) = (x_{i},y_{j}) \\
\pi_{t}: \mathbb{R}^{2} \times \mathbb{R}^{+} \longrightarrow \mathbb{R}^{+} 
&\qquad& 
\pi_{t}(p_{ij}) = \psi_{ij}
\end{eqnarray}
The following function will be used: 
\begin{eqnarray}
\mathrm{\Grid}:\mathbb{R}^{2}\longrightarrow \mathbb{R}^{+}
\qquad 
\mathrm{\Grid}:(x_{i},y_{j}) \longrightarrow \psi_{ij}
\end{eqnarray}
The set of coordinates $N((x_{i},y_{j})) = \{ (x_{a},y_{b}) : |(i,j)-(a,b)|=1 \}$ consists of the nearest neighbours of $(x_{i},y_{j})$. We use Table \ref{tab:Neighbourhoods} to define two sets of triplets: $\mathrm{NeighEik}((x_{i},y_{j}))$ and $\mathrm{NeighSide}(p_{\alpha \beta})$. The first one is used to compute the value $\psi_{ij}$ in Algorithms \ref{AlgoFMM} and \ref{AlgotFMM}. 
Similarly the second set is used in Algorithm \ref{AlgoSidewaysPDE}, where the relevant component of $\nthree(p_{\alpha \beta})$, the normal at $p_{\alpha\beta}$, is denoted by $\eta_{i}$. We are now ready to present the main algorithms. 

\begin{table}
\footnotesize
\begin{center}
\begin{tabular}{|c|c|c|}
\hline
~ & $\mathcal{S} = \mathrm{NeighEik}((x_{i},y_{j}))$ & $\mathcal{S} =\mathrm{NeighSide}(p_{\alpha\beta})$ \\
\hline \hline
$p_{ab} =(x_{a},y_{b},\psi_{ab})$ & 
\bull $(x_{a},y_{b}) \in N((x_{i},y_{j}))$ & 
\bull$(x_{a},y_{b}) \in \{ (x_{l},y_{m}) : l \in L , m \in M \}$ \\
 belongs to $\mathcal{S}$ 
& \bull $p_{ab} \in$ \Acc 
& \bull $p_{ab} \in$ \Acc \\ 
if it satisfies & 
\bull \Grid$(x_{a},y_{b}) = \psi_{ab}$ & 
\bull $\mathrm{sign}(\eta_{i}) = \mathrm{sign}(\nthree_{i})$ \\ 
~ &
\bull $(p_{\alpha \beta},p_{ab})$ passes the Orient.\mbox{}Test & 
where $\nthree = $\Norm$(p_{ab})$  \\
\hline
\end{tabular}
\caption{Definitions of two sets used in Algorithms \ref{AlgotFMM}, \ref{AlgoSidewaysPDE} and \ref{AlgoFMM}}
\label{tab:Neighbourhoods}
\end{center}
\end{table}

%
%


\begin{algorithm}
\caption{Main Loop}\label{AlgoMainLoop}
\begin{algorithmic}[1]

\While{\NB $\neq \emptyset$}

\Statex	\Procedure{Accept a point}{}
	\State $\psi_{\alpha \beta} \gets \min \{ \pi_{t}(p_{ij}) : p_{ij} \in$ \NB $ \}$ 
	\State \Grid $(x_{\alpha},y_{\beta}) \gets \psi_{\alpha \beta}$, ~~$p_{\alpha \beta} \gets (x_{\alpha},y_{\beta},\psi_{\alpha \beta})$
	\State remove $p_{\alpha \beta}$ from \NB $\qquad$ add  $p_{\alpha \beta}$ to \Acc
	\If{$(x_{\alpha},y_{\beta}) \in$ \FA }
		\State remove $(x_{\alpha},y_{\beta})$ from \FA
	\EndIf
\EndProcedure

\Statex	

\If{$\psi_{\alpha \beta}<T$}

	\Procedure{Update Pile}{}
		\ForAll{$(x_{a},y_{b}) \in N((x_{\alpha},y_{\beta}))$}
				\State $\vec{v} \gets  (x_{a},y_{b}) - (x_{\alpha},y_{\beta})$
				\If{$\mathrm{sign}(\vec{v} \cdot \ntwo(p_{\alpha \beta})) = \mathrm{sign}(F(p_{\alpha,\beta}))$ or 0}
					\If{\Grid$(x_{a},y_{b}) =\psi_{ab} < +\infty$}
						\State $p_{ab} \gets (x_{a},y_{b},\psi_{ab})$
						\If{\Orientth$(p_{ab}) \neq$\Orientth$(p_{\alpha \beta})$ }
							\State add $(x_{a},y_{b})$ to \Pp
						\EndIf
					\ElsIf{$(x_{a},y_{b}) \in$ \FA}
						\State add $(x_{a},y_{b})$ to \Pp 
					\EndIf
				\EndIf
		\EndFor
	\EndProcedure

\EndIf

\Statex	\Procedure{Update the Narrow Band}{}
	\ForAll{$(x_{i},y_{j}) \in$ \Pp}
		\State compute $\psi_{ij}$ and $\nthree_{ij}$ using Algo. \ref{AlgoFMM} if $F=F(\xvec)$ or Algo. \ref{AlgotFMM} if $F=F(\xvec,t)$
		\State $p_{ij} \gets (x_{i},y_{j},\psi_{ij})$, ~~\Norm$(p_{ij}) \gets \nthree(p_{ij})$
		\State remove $(x_{i},y_{j})$ from \Pp. 
		\ForAll{$p \in \mathrm{NeighEik}((x_{i},y_{j}))$}
			\State perform the Sign Test for the pair $(p_{ij},p)$
		\EndFor
		\If{at least one pair fails the Sign Test} 
			\State proceed to Algo. \ref{AlgoSideways}, which returns $(k,l,\psi_{kl})$, FAIL and $\nthree$
			\State $p_{ij} \gets (x_{k},y_{l},\psi_{kl})$, ~~\Norm$(p_{ij}) \gets \nthree$, ~~ $i \gets k$,~~ $j \gets l$
		\EndIf
		\State\Orientth$(p_{ij}) \gets - \mathrm{sign}(\nthree_{3}(p_{ij}))$
		\If{FAIL$==0$}
			\If{$\exists~q_{ij} \in $ \NB ~with $\pi_{s}(q_{ij}) = \pi_{s}(p_{ij})$}
				\State remove $q_{ij}$ from \NB
			\EndIf
			\State add $p_{ij}$ to \NB
		\EndIf
	\EndFor
\EndProcedure

\EndWhile
\end{algorithmic}
\end{algorithm}



\begin{algorithm}
\caption{Solve $|\nabla \psi(x,y)| = \frac{1}{|F(x,y,\psi)|}$}\label{AlgotFMM}
\begin{algorithmic}


	\State $u_{\pm} \gets \pi_{t}(p_{i\pm1j})$ if $p_{i\pm1j} \in \mathrm{NeighEik}((x_{i},y_{j}))$, $+\infty$ otherwise. 
	\State $v_{\pm} \gets \pi_{t}(p_{ij\pm1})$ if $p_{ij\pm1} \in \mathrm{NeighEik}((x_{i},y_{j}))$, $+\infty$ otherwise. 
 
	\State $\Theta \gets [0,0,0,0]$

	\For{Quadrant=1\ldots 4}
    
    		\If{Quadrant=1}
        		\State $\psi_{v} \gets v_{+}$,~ 
			$\psi_{u} \gets u_{+}$,~ 
			$\tau_{v} \gets \frac{h}{|F(x_{i},y_{j+1},\psi_{v})|}$,~ 
			$\tau_{u} \gets \frac{h}{|F(x_{i+1},y_{j},\psi_{u})|}$,~
    		\EndIf
	\State (and similarly for other quadrants)
    
%
%
%
    
    		\If{$(\psi_{v}=+\infty)$ and $(\psi_{u}=+\infty)$}
         		\State $\theta \gets +\infty$,~~ 
		\Else
		
			\State $\theta \gets \min_{\xi \in [0,1]} \{ \xi \psi_{v} + (1-\xi)\psi_{u} + \sqrt{ \xi^2+(1- \xi)^2} ~\left( \xi\tau_{v}+(1- \xi)\tau_{u} \right) \}$ 
			\State (see Appendix \ref{app:tFMM} for details)
		\EndIf 
    
		\State $\Theta$(Quadrant)$\gets \theta$,~~
	\EndFor

	\State $\psi_{ij} \gets \min(\Theta)$,~~ 
	$Q \gets \mathrm{argmin}(\Theta)$

\end{algorithmic}
\end{algorithm}




\subsection{Algorithm \ref{AlgoMainLoop}, Main loop} 
\label{subsec:AlgoMainLoop}

All steps of the main loop can be checked to be such that if $F=F(x,y) \geq \delta >0$, $\forall (x,y) \in \RR^{2}$, it reduces to the classical FMM. 
The sideways formulations are only used when $F\approx 0$. The first procedure, `Accept a point' is identical to the acceptance procedure in the standard FMM \cite{Sethian}, and we therefore omit to discuss it. For clarity, the point accepted during this step is labelled as $p_{\alpha \beta} = (x_{\alpha},y_{\beta},\psi_{\alpha \beta})$ in the rest of the discussion.


	\subsubsection{Update Pile}
	\label{subsubsec:AlgoUpdatePile}

This step is only performed if $\psi_{\alpha \beta}$ is below a certain predefined time $T$ to ensure that \NB ~is eventually empty. At this stage the algorithm needs to decide whether a nearest neighbour $(x_{a},y_{b})$ of $p_{\alpha \beta}$ should be put in \Pp. To this end three criteria are used: the position, status and orientation of that neighbour. Simply put, \texttt{line 12} has the following effect: If $F(p_{\alpha \beta})>0$ and the considered neighbour lies inside the curve $\CC_{\psi_{\alpha \beta}}$, then the pair $(x_{a},y_{b})$ is not added to \Pp. 
Next the status of this nearest neighbour is considered. If the pair $(x_{a},y_{b})$ was traversed by the curve in the past, then it is only added to \Pp ~if $p_{ab}:= (x_{a},y_{b},$\Grid$(x_{a},y_{b}))$ and $p_{\alpha \beta}$ have different orientations (\texttt{lines 13-16}). Indeed a point in the plane can only be traversed twice if the speed has changed sign in the meantime. If $(x_{a},y_{b})$ is still in \FA, then it is automatically added to \Pp ~(\texttt{lines 17-18}). 

\paragraph{Remark}
The presence of the `if $\psi_{\alpha \beta}<T$' in \texttt{line 8} is in contrast with the standard FMM, where it is proved that since $F\geq \delta >0$, all characteristics exit the domain in finite time. In this context, the size of the computational domain determines $T$. 


	\subsubsection{Update the Narrow Band}
	\label{subsubsec:AlgoUpdateNB}

This procedure assigns tentative values to the points in \Pp ~using either the standard FMM (see Appendix \ref{subsec:AlgoFMM}) or Algorithm \ref{AlgotFMM}, depending on the domain of $F$. Since $\psi$ only solves the Eikonal equation in regions where $|F| \geq \delta >0$, the first lines of those algorithms ensure that the points involved in the computation of $\psi_{ij}$ all lie in one such region. The steps outlined in \texttt{lines 24-28} represent the main modification to the standard FMM algorithm. The Sign Test is performed to check if the value returned by Algorithm \ref{AlgoFMM} or \ref{AlgotFMM} is valid. If it is not, then Algorithm \ref{AlgoSideways} is called. Using a sideways representation, it attempts to return a point $(x_{k},y_{l},\psi_{kl}) \in \CC_{\psi_{kl}}$. If it manages to do so, note that as explained in \S \ref{subsubsec:AlgoGetSideways}, the triplet returned may not be $(x_{i},y_{j},\psi_{ij})$, which is why $i$ and $j$ are relabelled in \texttt{line 28}. As in the standard FMM, if there already is a point in \NB ~with the same spatial coordinates $(x_{i},y_{j})$, then it is automatically removed from that list. The triplet $(x_{i},y_{j},\psi_{ij})$ is added to \NB. In the event where Algorithm \ref{AlgoSideways} fails, no new point is added to \NB.


\subsection{Algorithm \ref{AlgoSideways}, Sideways representation}
\label{subsec:AlgoSolveSideways}

This algorithm is called by the main loop when the speed $F$ is close to 0.


	\subsubsection{Determine representation}
	\label{subsubsec:AlgoDetermineRep}

In order to work locally, the first step of this procedure defines a square of side length at most $2sh$ for some $s \in \mathbb{N}$ as the new computational grid. Then the representation is chosen based on the normal at $p_{\alpha \beta}$. 



	\subsubsection{Initialization}
	\label{subsubsec:AlgoInitializationSideways}

This is the step where data are converted, as was mentioned in \S \ref{subsec:Interpolation}. The set NeighSide$(p_{\alpha \beta})$ is found; This ensures that the orientation of the points used next is compatible with the current representation. We take time to explain what we mean in \texttt{line 12} in details. It is ideal to build the sideways grid in such a way that the triplet $p_{\alpha \beta}$ is represented exactly on this grid. i.e., For example, if data are being converted to the $yt$-representation, then there should be $\tilde{l} \in L$ and $\tilde{r} \in R$ such that $(y_{\tilde{l}},t^{\tilde{r}}) = (y_{\beta},\psi_{\alpha\beta})$. The function $\psi_{1}:(y,t) \mapsto x$ then satisfies $\psi_{1}(y_{\tilde{l}},t^{\tilde{r}})=x_{\alpha}$, and $(x_{\alpha},y_{\beta},\psi_{\alpha\beta}) = (\psi_{1}(y_{\tilde{l}},t^{\tilde{r}}),y_{\tilde{l}},t^{\tilde{r}})$. This avoids rediscovering the point $p_{\alpha\beta}$ in the procedure `Get $(x_{k},y_{l},\psi_{kl})$' discussed in \S \ref{subsubsec:AlgoGetSideways}. Assigning values to the sideways grid in \texttt{line 13} is an interpolation problem. See Figure \ref{fig:ConversionOfData} (c).


	\subsubsection{Main loop}
	\label{subsubsec:AlgoMainLoopSideways}

The sideways PDE can now be solved. As mentioned in \S \ref{subsubsec:Discretization}, if either $\psi^{r-1}_{l-1}$ or $\psi^{r-1}_{l+1}$ are set to $+\infty$, then Algorithm \ref{AlgoSidewaysPDE} sets $\psi^{r}_{l}$ to $+\infty$. As depicted on Figure \ref{fig:ConversionOfData} (d), this has the effect of shrinking the size of the set where the PDE is solved: At most $s$ time steps can be taken before all the boundary information available has been used up. When the speed depends on time, we believe that using adaptive time stepping increases the success rate of Algorithm \ref{AlgoSideways}. 
We pick a small $\Delta t$ as long as the speed has not changed sign. This makes the scheme more accurate, thereby increasing the chances of assigning a value to $(x_{i},y_{j})$. Once $F$ changes sign, a large $\Delta t$ is chosen 
to increase the likelihood of assigning a value to $(x_{\alpha},y_{\beta})$. 



	\subsubsection{Get $(x_{k},y_{l},p_{kl})$}
	\label{subsubsec:AlgoGetSideways}

Deciding which value is returned by the algorithm is delicate and may be summarized as follows: By default, the algorithm always tries to assign a value to the pair in the \NB ~(\texttt{lines 22-25}). If this is not possible, then it tries to assign a new value to the pair $(x_{\alpha},y_{\beta}) = \pi_{s}(p_{\alpha  \beta})$ (\texttt{lines 26-29}). If this cannot be done either, then this representation failed. The algorithm must attempt using another representation which is chosen based on the ones already attempted. \textit{When the $\first$ attempt fails.} Suppose the $xt$-representation failed, then the algorithm attempts to use the $yt$-representation. \textit{When the $\second$ attempt fails.} Then the scheme resorts to the skewed representation. \textit{When the $\third$ attempt fails.} If the skewed representation also fails, then Algorithm \ref{AlgoSideways} fails entirely. Note that this is expected to happen if $(x_{i},y_{j})$ and $(x_{\alpha},y_{\beta}) \not \in \CC_{t}$ for any $t\in (p_{\alpha \beta},T)$. See Example 2 in \S \ref{sec:Accuracy}.
\paragraph{Remark}
In practice, after each iteration of the for loop \texttt{line 17}, we check if either $(x_{i},y_{j})$ or $(x_{\alpha},y_{\beta})$ has been traversed by the curve. If not, then the for loop keeps going. 


\begin{algorithm}
\caption{Sideways representation}\label{AlgoSideways}
\begin{algorithmic}[1]

\Procedure{Determine representation}{} 
	\State $s \in \mathbb{N}$ is picked, ~~$\vec{v} \gets (-s,-s+1,\ldots,s-1,s)$, ~~ $L \gets i+\vec{v}$, ~~ $M \gets j+\vec{v}$
	\State $L \gets L \cap I$, ~~ $M \gets M \cap J$
	\If{$|\nthree_{1}(p_{\alpha \beta})|>|\nthree_{2}(p_{\alpha \beta})|$}
		\State use $yt$-representation: $z \gets x$, ~~ $a \gets -\mathrm{sign}(\nthree_{1}(p_{\alpha \beta}))$
	\Else
		\State use $xt$-representation: $z \gets y$, ~~ $a \gets -\mathrm{sign}(\nthree_{2}(p_{\alpha \beta}))$
	\EndIf
\EndProcedure

\State Attempt $ \gets 1$

\While{Attempt$>0$}

\Procedure{Initialization}{} 
	\State get $\mathrm{NeighSide}(p_{\alpha \beta})$ 
	\State the sideways grid $(z_{l},t^{r})$, $l \in L$, $r\in R$ is built 
	\State $\Grid(z_{l},t^{r}) \gets \psi^{r}_{l}$ using interpolation and $\mathrm{NeighSide}(p_{\alpha \beta})$ where possible. 
	\State $\Grid(z_{l},t^{r}) \gets +\infty$ where interpolation cannot be used. 
\EndProcedure

\Procedure{Main loop}{} 
	\If{$a \neq 0$}
		\For{$n=1:R_{\max}$}		
			\State $\Delta t$ is determined
			\For{$l=2:L_{\max}-1$}
				\State compute $\psi^{r}_{l}$ using Algo. \ref{AlgoSidewaysPDE}.
			\EndFor				
		\EndFor
	\EndIf
\EndProcedure

\Procedure{Get $(x_{k}$, $y_{l},\psi_{kl})$}{}
	\If{$(x_{i},y_{j})$ is traversed by the curve}
		\State $\psi_{ij}$ is computed using interpolation
		\State $\psi_{kl} \gets \psi_{ij}$, $x_{k} \gets x_{i}$, $y_{l} \gets y_{j}$, $\nthree(\psi_{kl})$ is computed
		\State Attempt $ \gets 0$, FAIL $\gets 0$
	\ElsIf{$(x_{\alpha},y_{\beta})$ is traversed by the curve}
		\State $\psi_{\alpha \beta}$ is computed using interpolation
		\State $\psi_{kl} \gets \psi_{\alpha \beta}$, $x_{k} \gets x_{\alpha}$, $y_{l} \gets y_{\beta}$, $\nthree(\psi_{kl})$ is computed
		\State Attempt $ \gets 0$, FAIL $\gets 0$
	\Else{~This sideways representation failed.}
		\If{Attempt =1}
			\If{in $xt$-representation}
				\State use $yt$-representation: $z \gets x$, ~~$a \gets -\mathrm{sign}(\nthree_{1}(p_{\alpha \beta}))$
			\EndIf
			\If{in $yt$-representation}
				\State use $xt$-representation: $z \gets y$, ~~$a \gets -\mathrm{sign}(\nthree_{2}(p_{\alpha \beta}))$
			\EndIf 
			\State Attempt = Attempt +1
		\ElsIf{Attempt=2}
			\State use the skewed representation: $z \gets w$, ~~$a \gets -\mathrm{sign}((x_{\alpha}$, $y_{\beta})\cdot \ntwo(p_{\alpha \beta}))$
			\State Attempt = Attempt +1
		\ElsIf{Attempt=3}
			\State Point is not reached before $T$. ~~$\psi_{kl} \gets +\infty$, $x_{k} \gets +\infty$, $y_{l} \gets +\infty$
			\State Attempt $ \gets 0$, FAIL $\gets 1$
		\EndIf			
	\EndIf
\EndProcedure

\EndWhile

\end{algorithmic}
\end{algorithm} 

\clearpage

\begin{algorithm}
\caption{Solve $\psi_{t} + a F(\psi,y,t)\sqrt{1+\psi^{2}_{y}} = 0$}\label{AlgoSidewaysPDE}
\begin{algorithmic}[0]
	\If{$(\psi^{r-1}_{l-1}<+\infty)$ \& $(\psi^{r-1}_{l+1}<+\infty)$}
		\State $\alpha \gets$ sign$(aF(\psi^{r-1}_{l},y_{l},t^{r-1}))$
		\State $\psi^{r}_{l} \gets \psi^{r-1}_{l} - a \cdot \Delta t \cdot F(\psi^{r-1}_{l},y_{l},t^{r-1}) \cdot \sqrt{1+ \mathrm{upw}(\psi^{r-1},l,r,\alpha)} $ 
	\Else
		\State $\psi^{r}_{l} \gets +\infty$
	\EndIf
\end{algorithmic}
\end{algorithm}

\subsection{General remarks} 
\label{subsec:AlgoGeneralRemarks}

	\subsubsection{Data structure}
	\label{subsubsec:DataStructure}

One of the main differences with the standard FMM is the way we keep track of the various properties associated to each point. The fact that a point $(x_{\alpha},y_{\beta})$ on the plane may be traversed by the curve more than once requires a slightly richer data structure. For example, the functions \Norm and \Orientth have to be defined over triplets rather than over $\RR^{2}$. On the other hand, the lists \Pp~ and \FA~ still consist of coordinates. 
Note that when the code ends, \NB~ is empty whereas \FA~ may still contain points. The \Acc~ list may contain multiple triplets sharing the same spatial coordinates. In order to keep track of what the most `up-to-date' value associated with $(x_{a},y_{b})$ is, we make use of \Grid. Indeed, this function enjoys the following property: If there are distinct points $p_{ij},~q_{ij} \in$ \Acc~ such that $\pi_{s}(p_{ij})=\pi_{s}(q_{ij})$, then \Grid$(x_{i},y_{j}) = \max \{  \pi_{t}(p_{ij}),\pi_{t}(q_{ij}) \}$. Viewed as a set, \Grid$(\pi_{s}($\Acc$))$ is the upper semi-continuous envelope of $\MM$. 



	\subsubsection{Recovering the curve from $\MM$}
	\label{subsubsec:RecoveringTheCurve}

The set \Acc~ provides a discrete sampling of $\MM$. Using this point cloud, and possibly the normal $\nthree$ to $\MM$ at each point, a continuous representation of $\MM$ can be obtained. See for example \cite{Amenta,SurfaceFromPointCloud,SurfaceFromPointCloud2,Hoppe,MarchingCubes}, and \cite{Zhao}. Given a time $t \in (0,T)$, a contouring algorithm can then be used to find $\CC_{t}$ (see \cite{MarchingCubes}). 



	\subsubsection{Resolution}
	\label{subsubsec:Resolution}

By construction, the density of points sampling $\MM$ is expected to be lower in regions where $F \approx 0$. 
A remedy to this situation is to also record the points computed in the sideways representations
. 



\section{Complexity of the method} 
\label{sec:Complexity}

We derive some estimates for the computational time of the method when $n=2$, \ie two spatial dimensions. Consider a spatial grid of $N^2$ points with meshsize $h$. Let $\Delta t \sim h$, and define $N^{\ast}$ to be the number of gridpoints traversed by $\CC_{t}$ when $0<t<T$. (\ie if a given gridpoint $(x_{i},y_{j})$ is traversed twice, say at times $t_{1}$ and $t_{2}$ where $0<t_{1}<t_{2}<T$, then this contributes $+2$ to $N^{\ast}$.) By construction, the computational time depends on the size of the set $\mathcal{F}_{\MM} := \mathcal{F} \cap \MM$. Indeed, Algorithm \ref{AlgoSideways} is only called when Algorithm \ref{AlgoMainLoop} fails, which occurs whenever an accepted point computed by Algorithm \ref{AlgoMainLoop} is within a spatial distance $h$ of $\mathcal{F}_{\MM}$. Let the number of points computed by Algorithm \ref{AlgoSideways} be $\tilde{N}$. Since the complexity of Algorithm \ref{AlgoMainLoop} is well-known \cite{Sethian}, let us focus on estimating the complexity of a single call to Algorithm \ref{AlgoSideways}. On the square of side $2s$, the Narrow Band forms a one-dimensional subset. Using interpolation to convert the points in a neighbourhood of this set takes $\mathcal{O}(s)$ operations. Algorithm \ref{AlgoSidewaysPDE} makes at most $s^{2}$ operations. Those two steps are performed at most three times. We formally argue that the parameters of the algorithm can be chosen such that this \emph{worse case} complexity is not achieved. The procedure mentioned in the remark of \S \ref{subsubsec:AlgoGetSideways} can be used to prevent Algorithm \ref{AlgoSidewaysPDE} from making unnecessary computations. In \S \ref{subsubsec:AlgoMainLoopSideways}, we explain how using adaptive time-stepping increases the success rate of Algorithm \ref{AlgoSideways}. Moreover, as $N$ increases, the time distance between the accepted point computed by Algorithm \ref{AlgoMainLoop} and $\mathcal{F}_{\MM}$ decreases, which in turn makes Algorithm \ref{AlgoSideways} more successful on average. Altogether, this suggests that the number of attempts taken by Algorithm \ref{AlgoSideways} tends to one for almost all points; this is confirmed by the examples presented in the next section. As a result, the complexity of Algorithm \ref{AlgoSideways} tends to $\mathcal{O}(s)$ for large $N$. 
Given the assumption that $F$ is analytic, we expect $N^{\ast}-\tilde{N} = \mathcal{O}(N^{2})$ and $\tilde{N} = \mathcal{O}(N)$. In practice, the number of points in the local grid $s$ can be chosen as $kN$ for $k \ll 1$. The overall complexity can therefore be estimated as:
\begin{eqnarray}
 \OO (N^{2} \log (N^{2})) + \OO(N) \times \OO(kN) = 
 \underbrace{\OO (N^{2} \log (N^{2}))}_{(t)\mathrm{-FMM}} + \underbrace{\OO(kN^{2})}_{\mathrm{augmented~part}} 
\end{eqnarray}
Note that in the instance where $\mathcal{F}=\emptyset$, we recover the usual complexity of the FMM, namely $ \OO (N^{2} \log (N^{2}))$.


\section{Numerical Tests} 
\label{sec:Accuracy}

In this section, we illustrate how the method works with a variety of examples. We first discuss the methodology used to assess the convergence of the algorithms, and briefly summarize which features and results are expected. We then present the examples. More details are provided in Appendix \ref{app:Details}.

\subsection{Error measurement}
\label{subsec:ErrorMeasurement}

To assess the convergence of our algorithm, we compute the error associated to each point $p_{ij}$ returned by our scheme. 

\paragraph{Method 1: $E_{ij}$}
Suppose that an exact solution to the Level-Set Equation (\ref{eq:LSE}), $\phi(x,y,t)>0$ is known, with the property that $|\nabla \phi|=1$ for all $t$. Then evaluating $\phi$ at $p_{ij}=(x_{i},y_{j},\psi_{ij})$ returns the distance to the curve $\CC_{\psi_{ij}}$. We define $E_{ij} = |\phi(p_{ij})|$. This method is used for all examples except Example 4 when $F<0$.

\paragraph{Method 2: $G_{ij}$}
If an exact solution is not available, we get a numerical solution accurate enough to be considered exact. To this end, the Level-Set Equation is solved on a very fine grid using $\second$ order stencils in space, and RK2 in time. At each time step, the zero-contour of $\phi$ is found and sampled. The resulting list of points $\mathcal{B}$ provides a discrete approximation of $\MM$. The error associated to $p_{ij}$ is defined as the smallest three-dimensional distance to this exact cloud of points, \ie $G_{ij} = \min_{q \in \mathcal{B}} \{ |p_{ij}-q| \}$. This method is used for Example 4, when $F<0$.

\subsection{Tests performed}
\label{subsec:TestsPerformed}

\paragraph{Accuracy of Algorithm \ref{AlgoSidewaysPDE}} 

In \S \ref{subsubsec:Discretization}, it is mentioned that the sideways method we propose converges with at least $\mathcal{O}(h^{1/2})$ accuracy. To verify this, we pick a domain $\mathcal{U}$, initialize say $x=\psi(y_{m},t^{0})$ with exact data for some initial time $t^{0}$, and run Algorithm \ref{AlgoSidewaysPDE} for different gridsizes. The result is a subset of $\MM$, encoded as a list of points of the form $p^{r}_{m}=(\psi^{r}_{m},y_{m},t^{r})$. An error is associated to each point $p^{r}_{m}$ such that $\psi^{r}_{m}<\infty$ using either Method 1 or 2, i.e., $E^{r}_{m} = | \psi_{\mathrm{exact}}(y_{m},t^{r}) -  \psi^{r}_{m}|$ or $G^{r}_{m} =  \min_{q \in \mathcal{B}} \{ |p^{r}_{m}-q| \}$. A two-dimensional $L_{1}$ norm is then used to report the results in Figure \ref{fig:SidewaysConvergence}, \eg $L_{1} =h^{2} \cdot \sum_{m \in M} \sum_{r \in R} E^{r}_{m}$.

\paragraph{Accuracy of the full scheme}

When testing the accuracy of the full scheme, we distinguish between different regions of the resulting set \Acc. 
When studying a region computed by the $(t)$-FMM, a two-dimensional $L_{1}$ norm is used: $L_{1} =h^{2} \cdot \sum_{i \in I} \sum_{j \in J} E_{ij}$. Note that our assumptions on $F$ imply that the points computed using the sideways representations form one-dimensional sets of $\RR^{2}\times [0,T]$. Consequently, a one-dimensional $L_{1}$ norm is used to study those points: $L_{1} =h \cdot \sum_{i \in I} \sum_{j \in J} E_{ij}$. The global error (computed using all the points in \Acc) is a two-dimensional $L_{1}$ norm. It may be interpreted as an approximation of the volume enclosed by the exact and the approximated surfaces. 

We report the $L_{\infty}$ error qualitatively, through the black \& white representations of the set \Acc. Those figures are obtained by computing the relative error at each point, i.e., if $L_{\infty} = \max_{i \in I, ~ j \in J} \{ E_{ij} \}$, then $e_{ij} = E_{ij} / L_{\infty}$; and then shading the point accordingly: The darker a point, the larger its relative error $e_{ij}$. 


\subsection{Expectations}

By assumption, 
as $h \rightarrow 0$, the 1st order $(t)$-FMM scheme is used almost everywhere. This should reflect in the global error: It should follow the same trend as the $(t)$-FMM
. Moreover, we expect the call to Algorithm \ref{AlgoSideways} to increase the constant of convergence. A question that we address is the extent to which this degrades the local and global accuracy. 
We investigate the behaviour of the scheme in the presence of shocks \& rarefactions in Example 4, as well as in \S \ref{sec:Discussion}. \\

In all examples but the fourth one, the initial curve $\CC_{0}$ is the circle centred at the origin, with radius $r_{0}=1/4$. In all tests, data are initialized with exact values.


\subsection{Example 1: $F=F(t)= 1-e^{10t-1}$}
\label{subsec:Example1}

The main purpose of this example is to illustrate the basic ideas at play in the method. 
The speed is such that we expect the circle to first expand up to time $t=0.1$ and then contract until it collapses to the origin.
We first assess the order of convergence of the method for the sideways representation. 
The results reported on Figure \ref{fig:SidewaysConvergence} clearly indicate that it is $\mathcal{O}(h)$. This is higher than the $\mathcal{O}(h^{1/2})$ rate that was predicted in \S \ref{subsubsec:Discretization}. 
When the entire code is run, the set of \Acc ~points is presented on Figure \ref{fig:Ex1Surface} (a)-(b). 
One-dimensional optimization is used for those points traversed by a characteristic that is almost aligned with one of the spatial axis. We note that the sideways points are computed in the $yt$- (resp.\mbox{~}$xt$-)representation when $\ntwo$ aligns better with the $x$- (resp.\mbox{~}$y$-)axis. As expected, the sampling of the surface is sparser near the plane $t = 0.1$. 
Remark that in this example, none of the sideways points were computed in the skewed representation. 
The global convergence results are presented in Figure \ref{fig:Ex1Surface}, (d). We distinguish between the bottom part of the surface, the top part, and those points computed using the sideways representation. 
On the one hand, the results pertaining to the bottom part allow us to conclude that the $t$-FMM is $\mathcal{O}(h)$, as predicted in \S \ref{subsec:tFMM}. On the other hand, we can study the effect of the call to Algorithm \ref{AlgoSideways} on the behaviour of the scheme. Indeed, although the $t$-FMM also converges with $\mathcal{O}(h)$ when used to build the top part of the surface, it does so with a larger constant. 
We conclude that changing representation does deteriorate the accuracy of the sampling but only to a mild extent. 
To gain a better understanding of where the loss of accuracy from the bottom to the top part stems from, the relative $L_{\infty}$ error $e_{ij}$ associated to each point can be viewed on Figure \ref{fig:Ex1Surface} (c). Those points computed using one-dimensional optimization in the $t$-FMM, just after $t=0.1$ bear the largest errors. Two reasons can explain this: Some of those points are clearly traversed by characteristics that are not aligned with the spatial axes. Nevertheless, the scheme resorts to one-dimensional optimization to assign them values, for lack of a better method. Indeed, when those points are put in \Pp, there are not enough neighbours with negative orientation available to use two-dimensional optimization. We also suspect the constant of convergence of the $t$-FMM to depend on $\delta$ where $|F|\geq \delta >0$. In practice, this method is found to perform poorly when using points $p_{ij}$ such that $F(p_{ij}) \approx 0$. 

\paragraph{Remark}
Those outliers do not degrade the accuracy of the method, even locally. This is because by design, Fast Marching Methods assign values to the points in \Pp ~using only those neighbours with a smaller value. As a result, those outliers are not used in any of the calculations of the values of their neighbours. In practice, it is found that they eventually become isolated points of the \NB ~ before getting accepted.

\begin{figure}
		\centering
			\includegraphics[width=\textwidth]{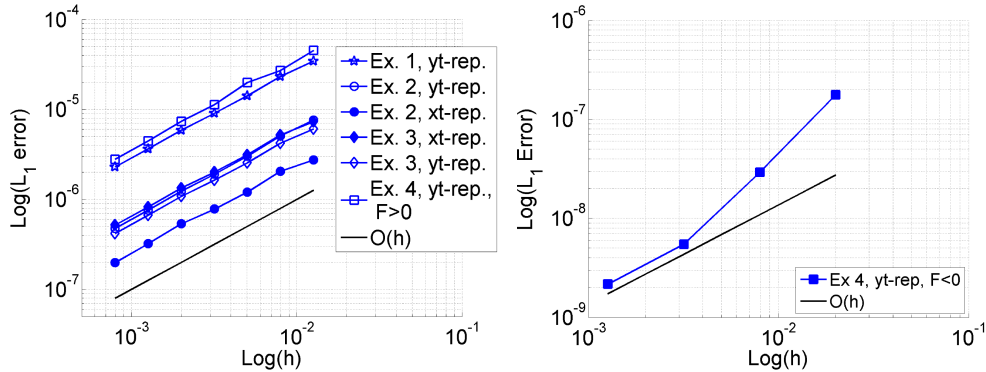}
		\caption{Convergence results for the sideways scheme, using (left) Method 1 (right) Method 2.}
	\label{fig:SidewaysConvergence}
	\end{figure} 

%

\begin{figure}
		\centering
			\includegraphics[width=\textwidth]{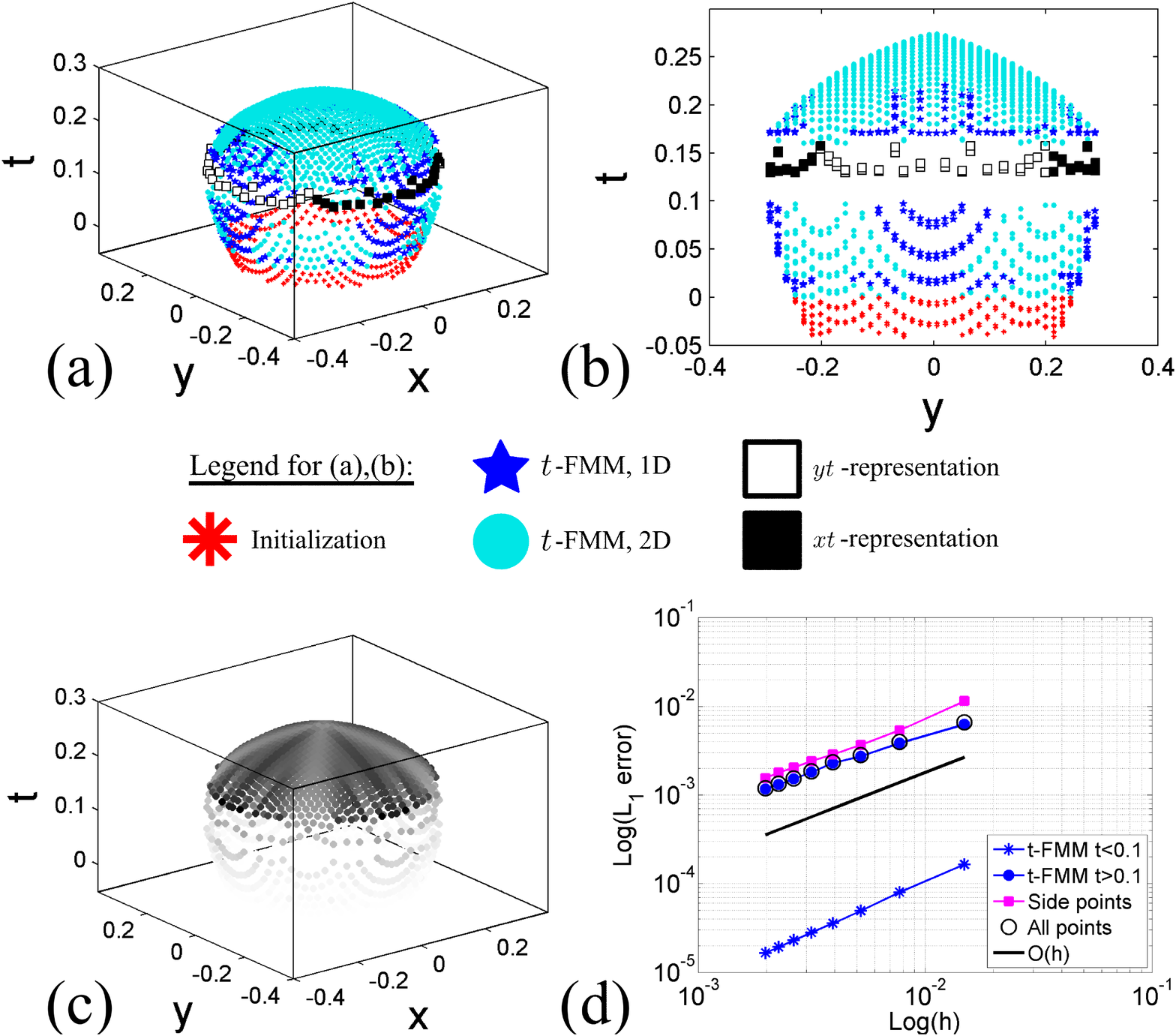}
		\caption{Example 1: (a) - (b) Different perpectives of the set \Acc. (c)  \Acc ~is featured. The relative error $e_{ij}$ determines the shade of each point. (d) Convergence results.}
	\label{fig:Ex1Surface}
\end{figure}

\subsection{Example 2: $F=F(x)=x$}

The given speed is such that the curve remains a circle whose radius grows while its center shifts to the right. Our method adequately handles this case as a single problem, although the speed changes sign across the $y$-axis. As expected, Algorithm \ref{AlgoSideways} fails near the points $(0,0.25)$ and $(0,-0.25)$, as shown on Figure \ref{fig:Ex3Surface} (a). 
The sideways scheme was tested both in the $xt$- and the $yt$-charts, and was found to be $\first$ order in each case (Figure \ref{fig:SidewaysConvergence}). The results for the full scheme show that it converges with $\mathcal{O}(h)$ accuracy everywhere (Figure \ref{fig:Ex3Surface} (b)). 
Let us bring up that a bi-directional FMM was proposed in \cite{chopp2009another} to solve a related problem.

\begin{figure}
		\centering
			\includegraphics[width=\textwidth]{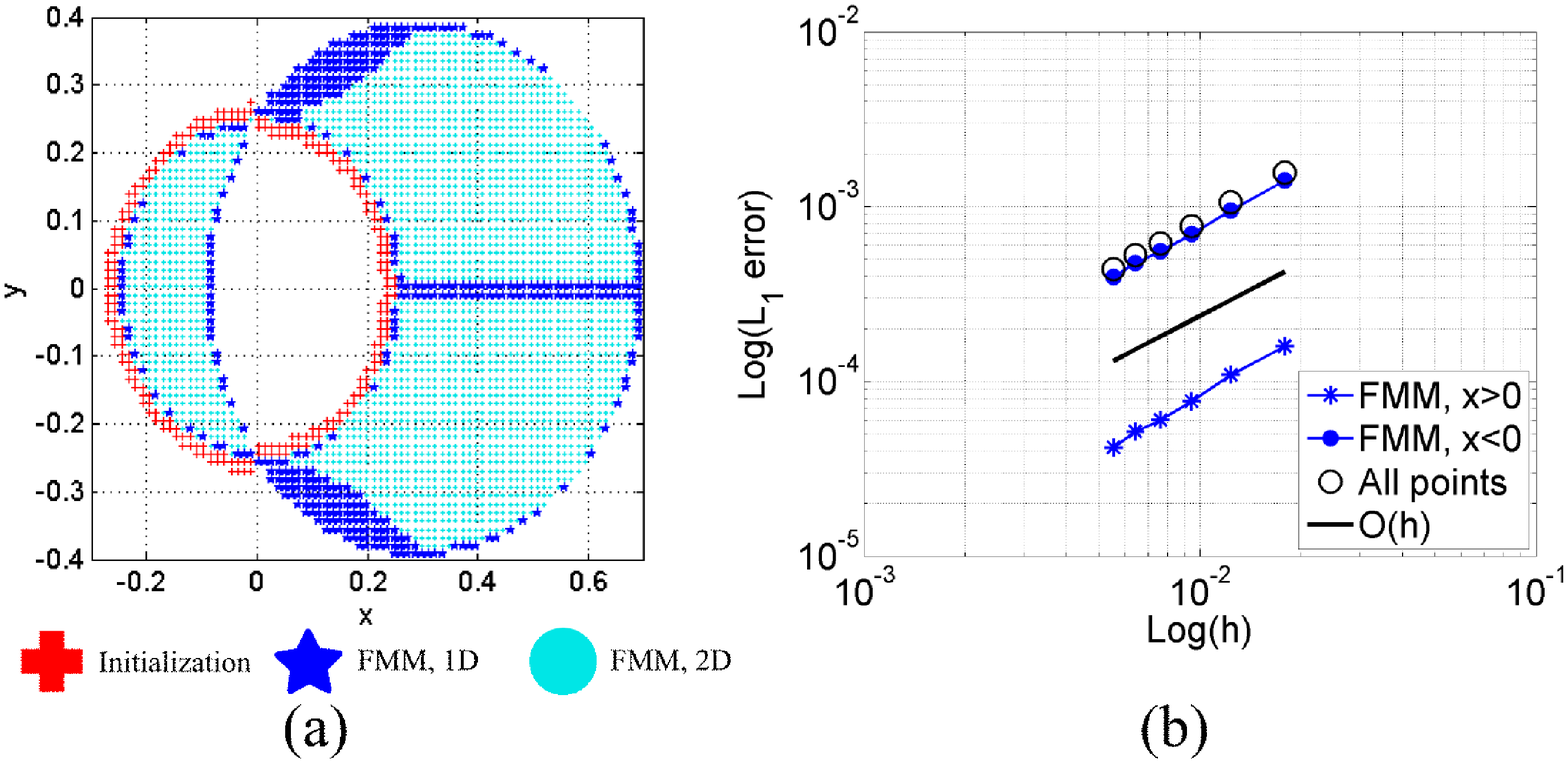}
		\caption{Example 2: (a) The set \Acc ~(b) Convergence results.}
	\label{fig:Ex3Surface}
	\end{figure} 

%


\subsection{Example 3: $F=F(x,y,t)$}
(See Appendix \ref{app:Example4} for details about $F$.) 
This example differs significantly from the previous ones in that the set $\mathcal{F}$ no longer consists of planes. The exact solution $\CC_{t}$ is a circle that only grows at first, and then starts moving in the positive $x$-direction. Our method is observed to perform very well on this example; We present the resulting surface and the first order convergence results on Figures \ref{fig:SidewaysConvergence} \& \ref{fig:Ex4Conv}. 

\begin{figure}
		\centering
		
			\includegraphics[width=\textwidth]{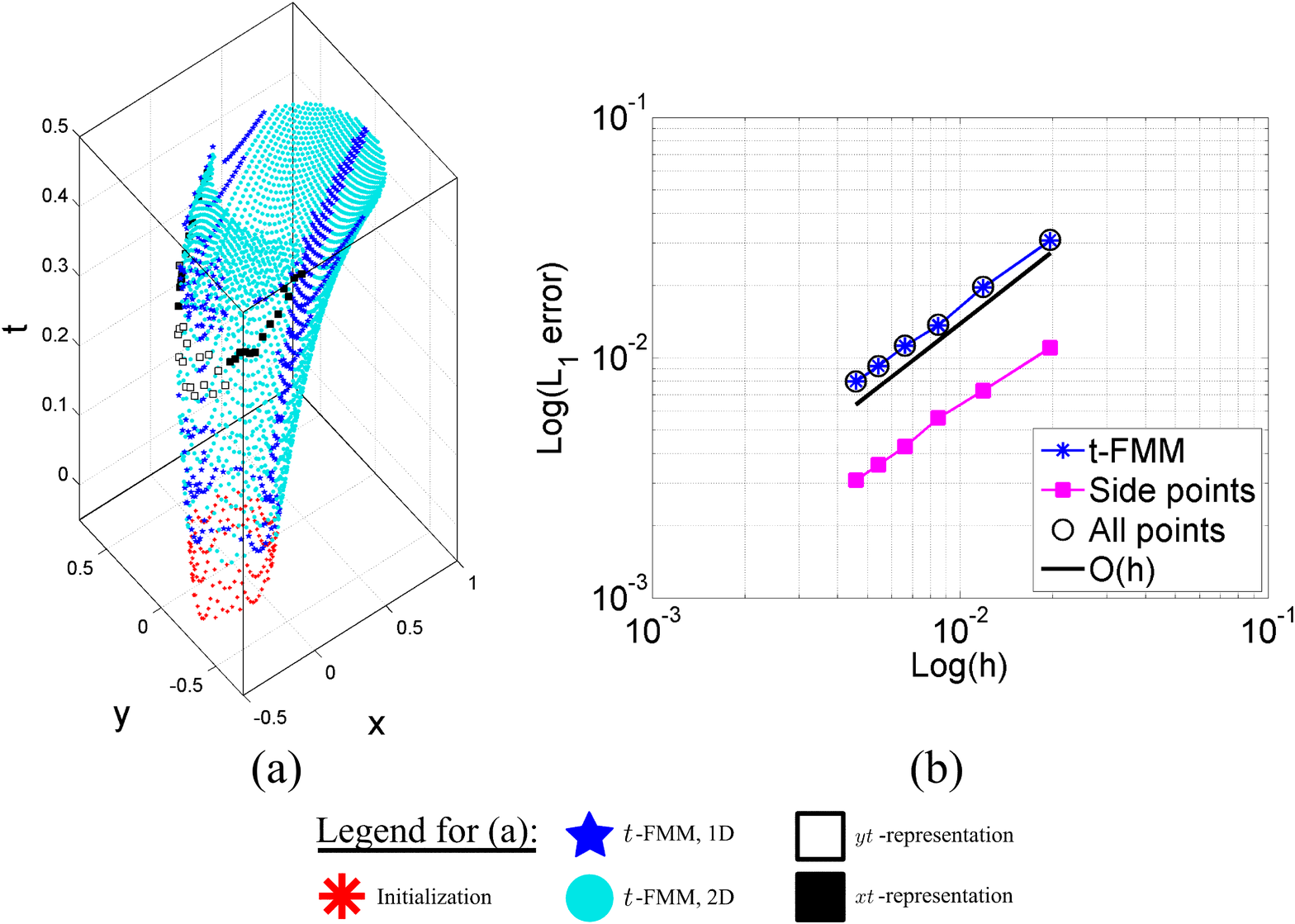}
		\caption{Example 3: (a) The set \Acc ~(b) Convergence results.}
	\label{fig:Ex4Conv}
	\end{figure} 

%
%


\subsection{Example 4: Two merging circles}

This example tests the ability of the scheme to capture topological changes. 
The initial codimension-one manifold consists of two disjoint circles of radius $r_{0}=1/4$, with centres at $(-.3,0)$ and $(.3,0)$. The speed is such that the circles first expand, until they touch and merge. 
Then the speed changes sign, which makes the curve shrink until it pinches off and splits into two distinct curves. The set \Acc ~is presented in Figure \ref{fig:Ex5Surface} (a).
The accuracy of the sideways scheme is investigated on a domain that comprises the shock when $F>0$, and the rarefaction when $F<0$. 
First order convergence is obtained in each case (Figure \ref{fig:SidewaysConvergence}). The full scheme also shows $\first$ order convergence (Figure \ref{fig:Ex5Surface} (b)). The convergence of the sideways points and the top part is a little shy of first order, but this can be attributed to the measurement method. Those results demonstrate how robust the overall scheme is. Note that a similar example was tackled in \cite{GFMM}, with a speed $F$ that depended linearly on time. 

\begin{figure}
		\centering
			\includegraphics[width=\textwidth]{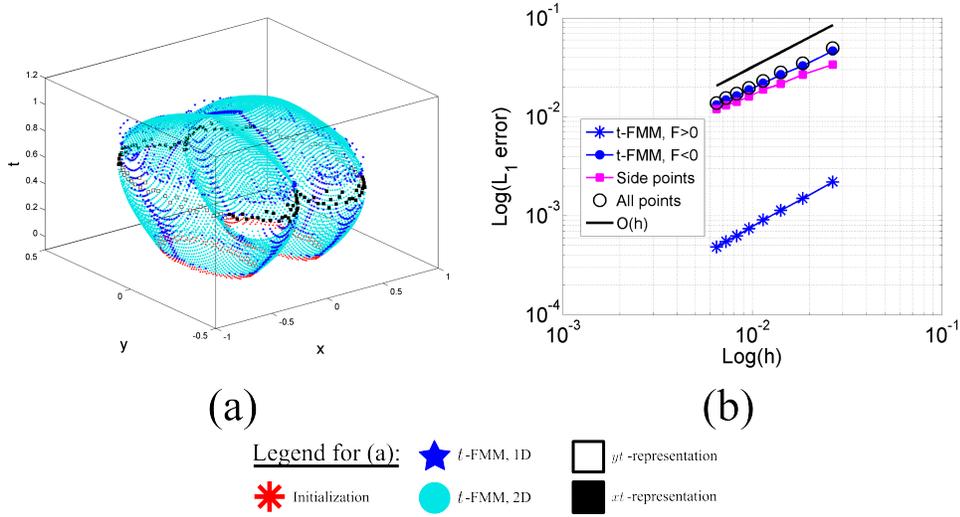}
		\caption{Example 4: (a) The set \Acc. (b) Convergence results.}
	\label{fig:Ex5Surface}
	\end{figure} 



\section{Discussion}
\label{sec:Discussion}

In the light of the examples presented in the previous section, we address the limitations, weaknesses and advantages of the algorithm.

We illustrate one of the main limitation of the scheme with an ultimate example. The speed is chosen such that the initial circle immediately develops a kink along the $x$-axis at time $t=0$. Its subsequent shape resembles that of an almond slowly turning in the counterclockwise direction while expanding. The sign of the speed changes, forcing the curve to contract while retaining its slanted shape. See Appendix \ref{subsec:AlmondExample} for details. The most prominent feature of this example is that, as is depicted on Figure \ref{fig:Lemon}, the shock is not a straight line. 
Remark that the speed $F$ does not satisfy the assumptions of this paper outlined in \S \ref{subsec:Assumptions}: It is only a $C^{0}$ function of $\RR^{2} \times [0,T]$. 
The surface that results from running the algorithm at high resolution is shown on Figure \ref{fig:Ex6Surface}. The shock is clearly visible, and has the expected figure-eight shape. Nevertheless some points `escape' through the shock when the speed changes sign, and start out two new fronts that keep on expanding. 
The problem stems from the procedure `Update Pile' in Algorithm \ref{AlgoMainLoop}. In order to decide which points go in \Pp, the code distinguishes between the inside and the outside of the curve using the normal $\ntwo$ (cf. \texttt{line 12} of Algorithm \ref{AlgoMainLoop}). Consider what happens along the shock, where $\ntwo$ has a discontinuity. So long as the expansion is outwards, this does not cause problems. But when the direction of propagation reverses, some points that should stay in \FA ~are moved into \Pp . 
A possible remedy to this issue is to approximate the normal cone along the shock. This additional information could be used as an updating criterion. 


\begin{figure}
		\centering
			\includegraphics[width=\textwidth]{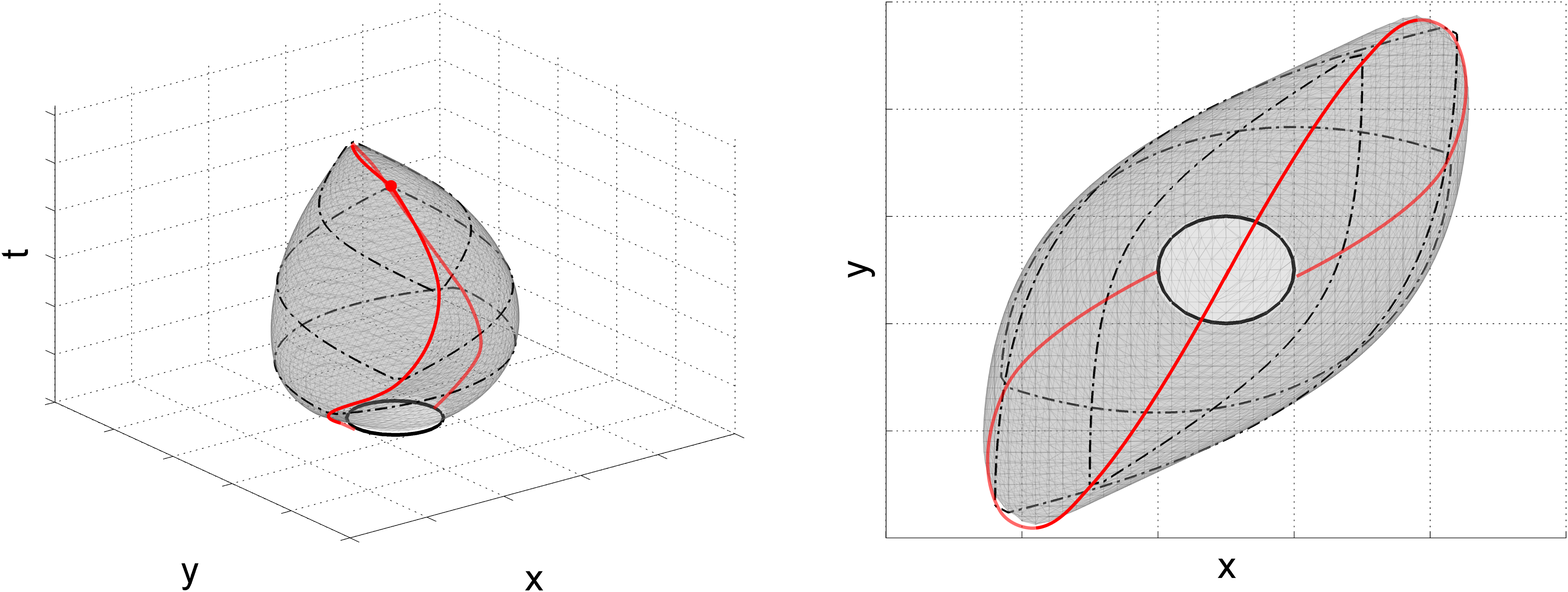}
		\caption{$\MM$ for the almond example. The shock appears as a red plain line.}
	\label{fig:Lemon}
	\end{figure} 

\begin{figure}
		\centering
			\includegraphics[width=\textwidth]{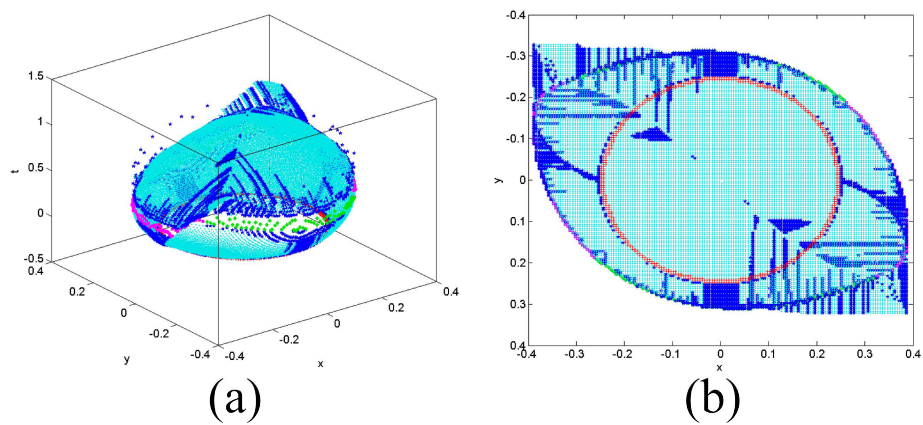}
		\caption{The almond example: The set \Acc ~(a) side view, (b) viewed from above.}
	\label{fig:Ex6Surface}
	\end{figure} 

%
%
%
%


On a much more general note, the gluing mechanism between the two formalisms heavily relies on an accurate computation of the normal. In practice, we found that the algorithm is rather sensitive to the accuracy of this quantity. 
Another weakness of the method is that, as it stands, Algorithm \ref{AlgoSideways} may fail when it is not supposed to. i.e., Even though $(x_{i},y_{j})$ or $(x_{\alpha},y_{\beta})$ belongs to $\CC_{t}$ for some $t \in (0,T)$, the algorithm does not assign any value to either of those coordinates. 
Two situations make such a scenario possible: (1) the time steps taken are too small, or (2) too little information obtained from interpolation is available. Recall from Proposition \ref{claim:convergence} that the CFL condition prevents large $\Delta t$. Case (2) can occur if $s\in \NN$, the number of points in the local grid in Algorithm \ref{AlgoSideways} is too small. However, if $s$ is large, Algorithm \ref{AlgoSideways} may not be able to carry out the step outlined in \texttt{line 13}. This happens if the points in NeighSide$(p_{\alpha\beta})$ sample more than one connected component of the set $\{ p\in \MM : \pi_{s}(p) \in [x_{i-s},x_{i+s}] \times [y_{j-s},y_{j+s}] \}$. See Figure \ref{fig:sTooLarge} for an illustration. However, choosing $s$ systematically so as to prevent this situation seems difficult. Ultimately $h$ and $s$ depend on measurable quantities such as the Lipschitz constants of $F$ and its derivatives, as well as the local curvatures of $\Gamma_{t}$. Nevertheless the way those parameters are intertwined and should be chosen is a question we wish to address in future work.


\begin{figure}
		\centering
			\includegraphics[width=.7\textwidth]{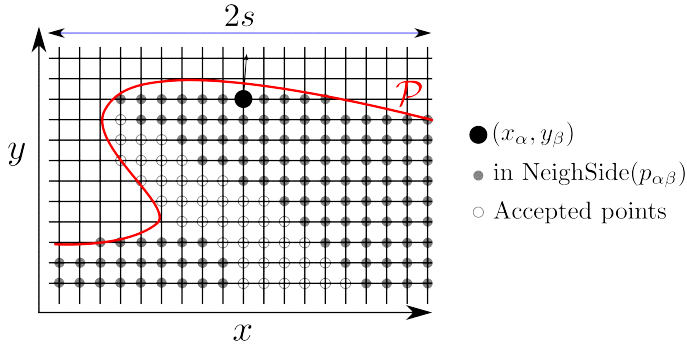}
		\caption{Illustration of what happens if $s$ is chosen too large. Data need to be converted to the $xt$-representation, but the set NeighSide$(p_{\alpha\beta})$ of neighbours of the black point $(x_{\alpha},y_{\beta})$ consists of two connected components.}
	\label{fig:sTooLarge}
	\end{figure} 

The fact that our method is a rather mild modification of the standard FMM has obvious benefits. As featured in all the examples, the sideways representations need only be used to compute a relatively small number of points sampling $\MM$. This allows us to safely predict that the computational complexity of the algorithm is lower than that of pre-existing algorithms used to tackle this problem, such as the LSM or the GFMM. Nonetheless, it is hard at this point to make more precise complexity statements.


\section{Conclusion} 
\label{sec:Conclusions}

Our aim was to devise an algorithm with low complexity able to describe the non-linear evolution of codimension one manifolds subject to a space- \& time-dependent speed function that changes sign. To this end, we illustrated how pre-existing methods can be combined to achieve this goal. The fact that we always dealt with explicit representations of the manifold implied that the dimensionality of the problem was never raised. The resulting algorithm was found to have a global truncation error of $\mathcal{O}(h)$. We tested it against a number of examples, some of which cannot be found in the current literature. 

The algorithm is found to be robust and accurate in all the tests presented. Regarding the complexity of the method, a legitimate concern is to clearly quantify how the success rate of Algorithm \ref{AlgoSideways} depends on the various parameters involved, as well as the speed function $F$ and the manifold $\MM$. Once this is done, more precise statements about the runtime of the algorithm can be made and tested.


Overall, the present work thoroughly introduces a new algorithm, along with proofs of convergence and stability, as well as sturdy numerical results. We believe that the main idea on which it relies -- \ie to change representation based on the speed function $F$ -- may be extended and improved in many ways that shall be explored. 


\appendix


\section[Quartic involved in the $t$-FMM]{A direct method to compute $\psi_{\mathrm{II}}$ in the $t$-FMM, in 2D}
\label{app:tFMM}

We provide a direct method for solving the minimization problem appearing in Equation (\ref{eq:VladMinimization}), in two dimensions. Introducing $\tau(y)=\frac{h}{|F(\xvec_{ij},\psi(y))|}$, we first use linear interpolation to simplify the quantity we wish to minimize: 
\begin{eqnarray}
&~& \psi(\tilde{\mathbf{x}}) + \sqrt{\xi^{2}+(1-\xi)^{2}}~ \frac{~h}{|F(\xvec_{ij},\psi(\tilde{\mathbf{x}}))|} 
~=~ \psi(\tilde{\mathbf{x}}) + \sqrt{\xi^{2}+(1-\xi)^{2}}~ \tau(\tilde{\mathbf{x}}) \nonumber \\
&\approx& \xi \psi(\xvec_{i-1,j})+(1-\xi) \psi(\xvec_{i,j+1}) + \sqrt{\xi^{2}+(1-\xi)^{2}}~ \left( \xi \tau(\xvec_{i-1,j})+(1-\xi) \tau(\xvec_{i,j+1}) \right) \nonumber \\
&=:& f(\xi)
\end{eqnarray}
Minimizing $f$ over $\xi \in (0,1)$ amounts to finding the roots of $0 = c_{4}\lambda^{4} + c_{3}\lambda^{3} + c_{2}\lambda^{2} + c_{1}\lambda + c_{0}$ where $\lambda \in (0,1)$ is such that $f'(\lambda)=0$. 
This quartic can be solved either directly with closed formulas, or with Newton's method --- we use the latter. For each root $r_{i} \in (0,1)$ the corresponding value of $\psi$ is computed as $\psi_{\mathrm{II},r_{i}}=f(r_{i})$. If $\psi_{\mathrm{II},r_{i}}< \psi(\xvec_{i-1,j})$ or $\psi_{\mathrm{II},r_{i}}<\psi(\xvec_{i,j+1})$, then $\psi_{\mathrm{II},r_{i}}$ is discarded. Values arising from minimization in one dimension are also computed as $\psi_{\mathrm{II},0} = \psi(\xvec_{i,j+1}) + \tau(\xvec_{i,j+1})$ and $\psi_{\mathrm{II},1} = \psi(\xvec_{i-1,j}) + \tau(\xvec_{i-1,j})$.
The global minimum is found by comparing all those values.

\section{Algorithm \ref{AlgoFMM}, standard FMM} 
\label{subsec:AlgoFMM}
We revisit the standard Fast Marching Method algorithm, using some of the notation we have introduced. 

\begin{algorithm}[h]
\caption{Solve $|\nabla \psi(x,y)| = \frac{1}{|F(x,y)|}$}\label{AlgoFMM}
\begin{algorithmic}

	\State $u_{\pm} \gets \pi_{t}(p_{i\pm1j})$ if $p_{i\pm1j} \in \mathrm{NeighEik}((x_{i},y_{j}))$, $+\infty$ otherwise. 
	\State $v_{\pm} \gets \pi_{t}(p_{ij\pm1})$ if $p_{ij\pm1} \in \mathrm{NeighEik}((x_{i},y_{j}))$, $+\infty$ otherwise. 

	\State $u \gets \min(u_{-},u_{+})$, $\qquad$ $v \gets \min(v_{-},v_{+})$, ~

	\If{$\max(u,v)-\min(u,v)<\frac{h}{|F(x_{i},y_{j})|}$}
		\State $\psi_{ij}=\frac{1}{2} \left( (u+v)+\sqrt{2 \left( \frac{h}{F(x_{i},y_{j})}\right)^2-(u-v)^2} \right)$
	\Else
		\State $\psi_{ij} = \min(u,v)+\frac{h}{|F(x_{i},y_{j})|}$
	\EndIf    
\end{algorithmic}
\end{algorithm}


\section{Implementation details for the examples}
\label{app:Details}

\subsection{Solvers used}
\label{subsec:SolversUsed}

We give some details about the examples presented in \S \ref{sec:Accuracy}. All tests were performed using \textsc{Matlab}$^\circledR$ \cite{Matlab}. In particular, finding the minimum value in the Narrow Band is done using the command \verb min ~.



Whenever a value $\psi_{ij}$ is computed by the $(t)$-FMM, the normal $\nthree_{ij}$ is approximated using the one-sided derivatives involving the points used in the computation of $\psi_{ij}$. For example: if two-dimensional optimization was used in Quadrant III to obtain $\psi_{ij}$, then 
\begin{eqnarray}
\vec{v} = \left( \frac{\psi_{ij}-\psi_{i-1,j}}{h}, \frac{\psi_{ij}-\psi_{i,j-1}}{h}, -\mathrm{\Orientth}(p_{ij}) \right) 
\quad \mathrm{and} \quad 
\nthree(p_{ij}) = \frac{\vec{v}}{|\vec{v}|} 
\end{eqnarray}
Within Algorithm \ref{AlgoSideways}, we approximate the normal as follows. For clarity, say the points $p^{k}_{j} = (\psi^{k}_{j},y_{j},t^{k})$ and $p^{k-1}_{j} = (\psi^{k-1}_{j},y_{j},t^{k-1})$ computed in the $yt$-representation with $x$-orientation $a$ were used to obtain $p_{ij} = (x_{i},y_{j},\psi_{ij})$. Then 
\begin{eqnarray}
\vec{v} = \left( -a, a ~\frac{\psi^{k-1}_{j+1}-\psi^{k-1}_{j-1}}{2h}, a ~ \frac{\psi^{k}_{j}-\psi^{k-1}_{j}}{dt}  \right) 
\quad \mathrm{and} \quad 
\nthree(p_{ij}) = \frac{\vec{v}}{|\vec{v}|} 
\end{eqnarray}
Note that this is not an approximation of the true normal at $p_{ij}$, which is $( -a, -\phi_{x} \psi_{y},$ $ -\phi_{x} \psi_{t} )$. However, the only two salient information we need from $\nthree$ are: the sign of $\nthree_{3}$ and the direction of $\ntwo$. 
The two-dimensional normal is simply obtained from $\nthree$ as $\ntwo = \frac{(\nthree_{1},\nthree_{2})}{|(\nthree_{1},\nthree_{2})|}$.




\subsection{Choice of parameters} In all examples, the number of points in each dimension is $N+1$, and the spatial grid spacings are even: $h=dx=dy$. The size of the local grid in Algorithm \ref{AlgoSideways} was set to be $s= \lfloor \frac{N}{3} \rfloor$. As discussed in \S \ref{subsubsec:AlgoMainLoopSideways}, we use adaptive time-stepping, in those examples where $F$ depends on time. In the fine part, before the time where $F=0$, we set $\Delta t = r_{1} h$. Passed that time, we let $\Delta t = r_{2} h$. 
To assess the convergence of the sideways methods, a $yt$-grid with spacings $h$ and $\Delta t = h/2$ was built.
Remark that the exact normal $\nthree$ was assigned to the points as they were accepted in all the examples, except Example 1 where it was computed as explained in \S \ref{subsec:SolversUsed}.


\subsection{Example 1}

The exact solution to the Level-Set Equation is $\phi(x,y,t) = \sqrt{x^{2}+y^{2}} -R(t)$ where $R(t) = \left( r_{0} - \frac{e^{10t}-1}{10e}+t \right)$. 
Domain: $[-.321,.319]^{2}$. $T_{F}=0.3$. $xt$- and $yt$-rep.: $r_{1} = 1/3$, $r_{2} = 2$. Skewed rep.: $r_{1} = r_{2} = 1$.
Domain for convergence of Algo. \ref{AlgoSidewaysPDE}: $(y,t) \in [-0.25,0.25]\times [0,0.3]$. 


%


\subsection{Example 2}

The signed distance function to the curve $\CC_{t}$ is given as $\phi(x,y,t)=\sqrt{(x-x_{c}(t))^{2}+y^{2}}-r(t)$ where $x_{c}(t) = r_{0}\sinh t$ and $r(t) = r_{0}\cosh t$. Note that $\phi$ does not solve the Level-Set Equation.
Domain: $[-1.01,0.99]^2$. $T_{F}=1$. $xt$- and $yt$-rep.: $r_{1} = 1/3$, $r_{2} = 2$. Skewed rep.: $r_{1} = 1/3$, $r_{2} = 5$.
Domain for convergence of Algo. \ref{AlgoSidewaysPDE}: $(y,t) \in [-0.25,0.25]\times [0,1]$ and $(x,t) \in [-0.25,0.25]\times [0,1]$.


\subsection{Example 3}
\label{app:Example4}

The exact solution to the Level-Set Equation is $\phi(x,y,t) = \sqrt{(x-g t)^2+y^2}-\left( r_{0}+c t \right)$
where $b=10$, $c=1/2$ and $g(t) = \arctan \left(b(t-0.5)\right) + \frac{\pi}{2}$. The speed is 
\begin{eqnarray}
F = \frac{(x-gt)(g't+g)}{\sqrt{(x-gt)^{2}+y^{2}}} + c
~
\Longrightarrow
~
F \approx
\left\{ \begin{array}{ll}
c & \mathrm{for~} t \mathrm{~small} \\ 
\frac{(x-\pi t)\pi}{\sqrt{(x-\pi t)^{2}+y^{2}}} + c  & \mathrm{for~} t \mathrm{~large} \\ 
\end{array} \right.
\end{eqnarray} 
We expect the circle to first expand (when $t$ is small), and then expand while moving to the right with speed $\pi$ (when $t$ is large). 
Domain: $[-1.51,+1.49]^{2}$. $T_{F}=0.5$. $xt$- and $yt$-rep.: $r_{1} = 1/3$, $r_{2} = 2$. Skewed rep.: $r_{1} = 1/3$, $r_{2} = 5$.
Domain for convergence of Algo. \ref{AlgoSidewaysPDE}: $(y,t) \in [-0.25,0.25]\times [0,0.5]$.


\subsection{Example 4}

The set $\CC_{0}$ consists of two disjoint circles of radius $r_{0}=0.25$, with centres at $(-0.3,0)$ and $(0.3,0)$. 
The speed is $F = 1-e^{2t-1}$. The circles touch along the $y$-axis when $t\approx 0.08$. When $t<0.5$ the exact solution to the Level-Set Equation is $\phi(x,y,t) = \min \left\{ \sqrt{(x+0.3)^{2}+y^{2}} - R(t), \sqrt{(x-0.3)^{2}+y^{2}} - R(t)  \right\} $
where $R(t)= r_{0} - \frac{e^{2t}-1}{2e}+t $. 
Domain: $[-1.5+ 0.01e,+1.5+ 0.01e]^{2}$. $T_{F}=1.2$. $xt$- and $yt$-rep.: $r_{1} = 1/3$, $r_{2} = 2$. Skewed rep.: $r_{1} = 1/3$, $r_{2} = 5$.
Domain for convergence of Algo. \ref{AlgoSidewaysPDE}: $(y,t) \in [-0.5,0.5]\times [0.2,0.5]$ and $(y,t) \in [-0.5,0.5]\times [0.5,.52]$.


\subsection{The Almond example} 
\label{subsec:AlmondExample}

The exact solution to the Level-Set Equation is 
\begin{eqnarray}
\phi(x,y,t) &=& \left( \sqrt{x^{2}+y^{2}}-r_{0}+\frac{e^{ct}-1}{ce}-t(1+C)  \right)+ \frac{t|xt-y|}{\sqrt{1+t^{2}}} \\
&=:& \tilde{\phi}(x,y,t) + g(x,y,t)
\end{eqnarray}
The constants are set to be: $r_0 = 1/4$, $c = 1$, and $C = .65$. The function $\phi$ is made up of two parts: $\tilde{\phi}$ is qualitatively the same as in Example 1. 
Domain: $[-0.5,0.5]^2$. $T_{F}=1.9$. $xt$- and $yt$-rep.: $r_{1} = 1/3$, $r_{2} =2$. Skewed rep.: $r_{1} = 1/2$, $r_{2} = 6$. \\


\textbf{Acknowledgements}
The authors wish to thank Prof.\mbox{~}A.Oberman for helpful discussions. 
The second author would like to thank the organizers of the 2011 BIRS workshop ``Advancing numerical methods for viscosity solutions and applications'', Profs.\mbox{~}Falcone, Ferretti, Mitchell, \& Zhao for stimulating discussions which eventually lead to the present work.


\bibliography{bib}{}
\bibliographystyle{plain}

\end{document}